\newenvironment{refthm}[1]
  {\innercustomthm}
  {\endinnercustomthm}
\newtheorem{thm}{Theorem}
\newtheorem*{thm*}{Theorem}
\newtheorem*{lem*}{Lemma}
\newtheorem{lem}[thm]{Lemma}
\newtheorem*{prop*}{Proposition}
\newtheorem{conj}[thm]{Conjecture}
\theoremstyle{definition}
\newtheorem{rmk}[thm]{Remark}
\numberwithin{thm}{section}
\DeclareMathOperator{\rank}{rank}
\DeclareMathOperator{\Area}{area}
\DeclareMathOperator{\vol}{vol}
\DeclareMathOperator{\id}{id}
\newcommand{\ph}{\varphi}
\newcommand{\epsi}{\varepsilon}
\newcommand{\ZZ}{\mathbb{Z}}
\begin{document}
\title{On Freedman's link packings}
\author{Fedor Manin}
\address[F.~Manin]{Department of Mathematics, UCSB, Santa Barbara, CA, United States}
\email{manin@math.ucsb.edu}
\author{Elia Portnoy}
\address[E.~Portnoy]{Department of Mathematics, MIT, Cambridge, MA, United States}
\email{eportnoy@mit.edu}
\begin{abstract}
    Recently, Freedman introduced the idea of packing a maximal number of links into a bounded region subject to geometric constraints, and produced upper bounds on the packing number in some cases, while commenting that these bounds seemed far too large.  We show that the smallest of these ``extravagantly large'' bounds is in fact sharp by constructing, for any link, a packing of exponentially many copies as a function of the available volume.  We also produce improved and generalized upper bounds.
\end{abstract}
\maketitle

\section{Introduction}

\subsection{Main results}
In \cite{Free}, Freedman proposed a new type of quantitative problem in topology.  Given a link $L \subset [0,1]^3$, define the \emph{$\epsi$-diagonal packing number} $n_L(\epsi)$ to be the number of copies of $L$ that can be simultaneously embedded in $[0,1]^3$ subject to the following constraints:
\begin{enumerate}
    \item \textbf{Topological constraint:} Each copy is contained in a disjoint ball.
    \item \textbf{Geometric constraint:} Within a copy, the components are separated by a distance of at least $\epsi$.
\end{enumerate}
\noindent We refer to an embedding satisfying these constraints as an \emph{$\epsi$-diagonal embedding}.

This can be thought of, as Freedman suggests, as a topological analogue of the famous sphere-packing problem; it can also be thought of as a generalization of the study of ``geometric'' or ``thick'' knots and links, in which one may for example ask for the length of rope of a specified thickness required to tie a particular knot or link, or how this length relates to topological knot or link invariants.

Freedman showed that $n_L(\epsi)$ is $\exp(O(\epsi^{-3}))$ for links with nonzero linking number, and more generally $\exp(\operatorname{poly}(\epsi^{-1}))$ for homotopically nontrivial links in the sense of Milnor, but suggested that the true answer for all links may be $O(\epsi^{-3})$.

In this paper we disprove this suggestion, showing instead that Freedman's upper bound for the Hopf link is sharp:
\begin{thm} \label{lower-bound}
    For every tame link $L$, $n_L(\epsi) \geq \exp(C(L)\epsi^{-3})$, where $C(L)>0$ is a constant.
\end{thm}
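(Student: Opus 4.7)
The plan is to construct an explicit packing. The crucial observation is that the ``balls'' in the definition are topological 3-balls, so they may be arbitrarily thin or irregular in Euclidean shape; it is this flexibility that allows exponentially many disjoint balls to coexist in the unit cube, even though each must contain a link whose components are at Euclidean distance $\geq \epsi$.

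First I would reduce to a geometric building block. For any tame link $L$ and any $\epsi>0$, I would exhibit an $\epsi$-diagonal embedding of $L$ inside a topological 3-ball whose bounding box has diameter $\leq D(L)\cdot\epsi$ but whose total volume can be made arbitrarily small, obtained by taking a thin regular neighborhood of an appropriate spanning 2-complex $\Sigma_L$ (for example, for the Hopf link, two perpendicular spanning disks glued along an arc). The output of this step is a parameterized family of thin ``tiles,'' each a topological 3-ball containing an $\epsi$-diagonal copy of $L$, with tile volume going to zero as the thickness parameter $\tau \to 0$.

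Next I would show that $\exp(C(L)\epsi^{-3})$ of these tiles can be packed disjointly in $[0,1]^3$. The volume budget suggests taking each tile to have volume roughly $\exp(-C\epsi^{-3})$, which corresponds to thickness $\tau \approx \epsi\cdot\exp(-C\epsi^{-3})$. Concretely, I would arrange the tiles hierarchically: start from the cube and iteratively split each existing tile into two disjoint topological 3-subtiles using a thin separating wall, each time verifying that the two children are topological 3-balls and each still contains a translated/isotoped copy of the scaffold $\Sigma_L$ with its $\epsi$-separated boundary link. Continuing this doubling for $k = \Theta(\epsi^{-3})$ stages produces $2^k = \exp(\Theta(\epsi^{-3}))$ tiles while respecting the volume budget, since the walls introduced at each stage have negligible combined volume.

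Finally I would verify at each stage that (i) each newly created cell is a topological 3-ball (by controlling how the separating wall meets the ambient scaffold), (ii) each still supports a thin scaffold with $\epsi$-diagonal boundary link, and (iii) the cumulative volume is under control.

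The main obstacle is the tension between the geometric separation requirement and the iterated thinning. As tiles become thinner, each must still contain two loci at Euclidean distance $\geq \epsi$ whose neighborhoods in the tile realize the linking of $L$---a genuinely 3-dimensional constraint that cannot be met by arbitrarily thin tubes or pancakes. Designing the recursion so that this ``geometric capacity'' for an $\epsi$-diagonal $L$ persists through all $\Theta(\epsi^{-3})$ levels of subdivision, and quantifying the resulting constant $C(L)$, is the technical heart of the argument.
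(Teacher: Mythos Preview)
Your proposal has a genuine gap at exactly the step you yourself flag as ``the technical heart.''  The recursive subdivision you describe cannot work as stated: if a tile is a thin neighborhood of a spanning $2$-complex $\Sigma_L$ for $L$, then cutting it by a ``thin separating wall'' does not produce two children each containing an $\epsi$-diagonal copy of $L$.  A wall transverse to $\Sigma_L$ cuts the spanning surfaces into pieces whose boundaries are arcs, not linked circles.  The natural alternative---first doubling $\Sigma_L$ into two nearby parallel copies and then separating them---fails for a different reason: the two parallel copies of $L$ are not split.  For the Hopf link, two parallel copies form a $4$-component link in which \emph{every} pair of strands has linking number $1$, so no embedded $2$-sphere can separate one Hopf pair from the other.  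Iterating this $k$ times would give the link $L(2^k)$ of $2^k$ Hopf fibers, not $2^{k-1}$ split Hopf links.  The obstruction is not volume; it is linking.

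What the paper does, and what your outline is missing, is an explicit mechanism for killing this extra linking while preserving the $\epsi$-separation.  The construction starts from the $n$-cabling of a diagram of $L$ (which has the wrong link type, exactly as above) and then, at each crossing, reroutes the $n$ parallel over-strands and $n$ parallel under-strands through a long ribbon subdivided into $\sim\epsi^{-3}$ cells.  The copies are indexed by binary words of length $\sim\epsi^{-3}$, and the $\ell$th letter of $w$ dictates how $\gamma^i_w$ and $\gamma^j_w$ behave in the $\ell$th cell; an inductive argument on word length shows that the resulting tangle is isotopic rel boundary to a vertically stacked one, so the global link is isotopic to $nL$.  The crucial point is that the crossings being altered are always between strands belonging to \emph{different} copies, hence not subject to the $\epsi$-constraint, so the rerouting can be done within the cells.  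Your volume-budget heuristic correctly predicts the exponent, but without a device of this kind the doubling step never gets off the ground.
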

\noindent We also improve and generalize Freedman's upper bounds using his methods and some additional ideas:
\begin{thm} \label{sharp-upper-bound}
    For every homotopically nontrivial link $L$, $n_L(\epsi)=\exp(O(\epsi^{-3}))$.
\end{thm}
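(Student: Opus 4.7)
The plan is to extend Freedman's $\exp(O(\epsi^{-3}))$ bound for links with nonzero linking number to the full class of Milnor-homotopically nontrivial links. Freedman's argument for the linking-number case uses the Gauss integral: each copy contributes at least $1$ to a linking-number sum while the $\epsi$-separation bounds the integrand, yielding the $\exp(O(\epsi^{-3}))$ count. For higher Milnor invariants the analogous iterated integrals naively blow up to $\exp(O(\epsi^{-3k}))$, where $k$ is the weight of the relevant $\bar\mu$-invariant, so the task is to preserve the exponent $3$ independent of $k$.

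Concretely, I would place an $\epsi$-grid on $[0,1]^3$ and simplicially approximate each copy of $L$ on the $1$-skeleton, preserving ambient isotopy class; the $\epsi$-separation between components of a single copy controls the local strand patterns cell-by-cell. Then I associate to each copy a combinatorial \emph{fingerprint} consisting of its list of occupied $\epsi$-cells together with the (boundedly many) per-cell strand data, of which there are at most $\exp(O(\epsi^{-3}))$ values overall. The final step is to invoke Milnor-nontriviality of $L$, combined with the split-ball topological constraint, to argue that each copy forces a distinguishable fingerprint, so that $N \leq \exp(O(\epsi^{-3}))$.

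The main obstacle I expect is this final step, carried out uniformly in the Milnor weight. Unlike the linking number, higher $\bar\mu$-invariants are defined only modulo indeterminacy and are computed by iterated integrals whose naive estimates gain a factor of $\epsi^{-3}$ per layer of the nilpotent tower — reintroducing exactly the weight-dependent polynomial blow-up in Freedman's original $\exp(\operatorname{poly}(\epsi^{-1}))$ bound. To collapse this, one needs to replace the analytic iterated-integral picture with a combinatorial one: for instance, work in a fixed nilpotent quotient of $\pi_1$ of the complement of the other copies, choose a lift of a single nontrivial $\bar\mu$-invariant of each copy adapted to the $\epsi$-discretization, and show that this Milnor content is faithfully recorded by the fingerprint without an additional $k$-dependent factor. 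I expect the bulk of the technical work to lie in executing this combinatorial-lift construction and showing that contributions from different copies cannot conspire to hide a copy in the fingerprint.
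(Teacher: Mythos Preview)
Your outline correctly isolates the coloring step --- tile at scale $\epsi$, record which cubes each component meets, and note there are $\exp(O(\epsi^{-3}))$ possible colorings. But the final step, as you frame it, is a genuine gap. You propose to show that distinct copies receive distinct fingerprints; without the $\epsi$-thickness hypothesis (which is absent in this theorem), a single strand may wind arbitrarily many times through the same cell, and nothing prevents many copies from having literally identical cell-occupation data. The paper does not attempt to prove injectivity of the coloring map, and it is almost certainly false in general.

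What the paper does instead is bound the number of copies \emph{sharing} a given coloring, and only polynomially --- the polynomial is then absorbed into the exponential. After reducing to the case where $L$ is almost homotopically trivial, fix the set $U$ of cubes meeting the first component, and let $I$ index those copies whose first component $L_i^1$ lies in $U$ (so the remaining strands $L_i'$ lie in $I^3\setminus U$). The split-ball hypothesis supplies the nondegeneracy you were worried about for free: for $k\neq i$ the class $[L_k^1]$ is already trivial in $\pi_1(I^3 - L_i')$, while $[L_i^1]$ maps to a nontrivial element of the free Milnor group $FM_{m-1}$ because $L$ is homotopically nontrivial. Hence the product map $\pi_1(U)\to\bigoplus_{i\in I}FM_{m-1}$ carries the subgroup generated by the $[L_i^1]$ onto a copy of $\ZZ^{|I|}$. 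But $\pi_1(U)$ has $O(\epsi^{-3})$ generators and $FM_{m-1}$ is $(m-1)$-step nilpotent, so this image has rank $O(\epsi^{-3(m-1)})$, whence $|I|=O(\epsi^{-3(m-1)})$.

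So the missing idea is not a combinatorial lift of iterated integrals or a refined fingerprint, but a \emph{rank bound} for a free abelian subgroup inside a nilpotent image of $\pi_1(U)$. The ``contributions from different copies cannot conspire'' issue you flag is handled automatically by the disjoint-ball condition rather than by any feature of the discretization, and the weight-independence of the exponent $3$ comes simply from the fact that a polynomial factor $\epsi^{-3(m-1)}$ disappears into $\exp(O(\epsi^{-3}))$, not from collapsing the nilpotent tower.
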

\begin{thm} \label{weak-upper-bound}
    For every link with a nontrivial Milnor invariant of order $q$,
    \[n_L(\epsi)=\exp(O(\epsi^{-6(q-1)})).\]
\end{thm}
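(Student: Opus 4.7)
The plan is to extend Freedman's argument for the homotopically nontrivial case (the one giving the weaker bound $\exp(\operatorname{poly}(\epsi^{-1}))$), keeping careful track of how detecting a Milnor invariant of order $q$ contributes to the complexity. The qualitative picture is that detecting an order-$q$ invariant requires a geometric certificate of depth $q-1$, and each level of that certificate contributes a polynomial factor of $\epsi^{-6}$ to the combinatorial complexity; taking logarithms then gives the claimed exponent $6(q-1)$.

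First, I would triangulate $[0,1]^3$ at scale $\delta \asymp \epsi$, producing $O(\epsi^{-3})$ simplices, arranged so that any $\epsi$-diagonal embedding admits a combinatorial approximation in which distinct components of the same copy stay disjoint. Given the packing $\{L_i\}_{i=1}^N$ in disjoint topological balls $\{B_i\}$, I would then build inside each $B_i$ a Whitney tower (or equivalently, a grope) of depth $q-1$ whose top-level intersection number realizes the nontrivial Milnor invariant $\bar\mu_I(L_i)$. The $\epsi$-diagonal hypothesis plus the fact that $L_i$ sits in a topological ball is what permits such a certificate to be chosen in a bounded neighborhood of $L_i$, and hence to be approximated combinatorially in the triangulation.

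The quantitative heart of the argument is the bound on the number of combinatorial certificates. I would show inductively that the $k$-th level, $k=1,\ldots,q-1$, of the tower can be recorded by $O(\epsi^{-6})$ bits of data supported in the triangulation — one factor of $\epsi^{-3}$ for the spatial location of the surface's support and another factor of $\epsi^{-3}$ for tracking the pairing/transversality combinatorics against the previous level. Summing across the $q-1$ levels gives $O(\epsi^{-6(q-1)})$ bits per copy, and hence $\exp(O(\epsi^{-6(q-1)}))$ possible combinatorial types. Since the balls $B_i$ are disjoint and each labeled combinatorial type determines a copy up to ambient isotopy inside its ball, at most one copy can realize each combinatorial type, so $N \leq \exp(O(\epsi^{-6(q-1)}))$.

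The hardest step will be the per-level complexity estimate in the third paragraph: Whitney moves can in principle introduce a cascade of new intersections at each step, so without a careful construction the combinatorial cost can blow up much faster than $\epsi^{-6}$ per level. The key novelty beyond Freedman's framework is an induction on $q$ that exploits the $\epsi$-diagonal condition and the confining ball $B_i$ to maintain a near-minimal combinatorial representative of the certificate at every level, amortizing the combinatorial cost so that the total remains $O(\epsi^{-6(q-1)})$.
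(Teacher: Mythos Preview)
Your outline has a genuine gap at its load-bearing step: there is no reason the Whitney tower (or grope) certifying the order-$q$ Milnor invariant can be chosen with combinatorial complexity $O(\epsi^{-6})$ per level. The $\epsi$-diagonal hypothesis only separates distinct components of the \emph{same} copy; it imposes no control whatsoever on the geometry of an individual component, on the topological ball $B_i$, or on the immersed disks and Whitney disks one would build inside $B_i$. The ball $B_i$ is merely a topological ball and may be arbitrarily thin, long, and knotted at scale $\epsi$, so the surfaces in your tower may need to pass through an uncontrolled number of simplices. Your sentence ``The $\epsi$-diagonal hypothesis plus the fact that $L_i$ sits in a topological ball is what permits such a certificate to be chosen in a bounded neighborhood of $L_i$'' is precisely the claim that needs proof, and I do not see how to prove it: nothing in the setup bounds the area of the base disks, the number of intersection points at each stage, or the area of the Whitney disks. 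Your own final paragraph flags this (``Whitney moves can in principle introduce a cascade of new intersections''), but the proposed fix---an amortized induction exploiting the $\epsi$-separation---does not have an obvious mechanism, since the separation says nothing about a single surface's self-intersections or its intersections with earlier stages.

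The paper's argument avoids geometric certificates entirely and is purely algebraic. One first pigeonholes on the union $U$ of $\epsi$-scale tiles meeting the first component; there are $\exp(O(\epsi^{-3}))$ such $U$. Fixing $U$, the remaining components lie in $I^3\setminus U$, and one shows that the image of the longitude of $L_i^1$ in $Q_q^p\pi_1(I^3-L_i^1-L_i')\cong Q_q^pF_m$ (a finite $(q-1)$-step nilpotent quotient) is determined, via van Kampen, by three pieces of data depending only on the curve inside $U$: an element of $Q_q^pF_{r+1}$, a homomorphism $Q_q^pF_s\to Q_q^pF_{r+1}$, and a subgroup of $Q_q^pF_{r+1}$, where $r,s=O(\epsi^{-3})$ come from the number of generators of $H_1(U)$ and $H_1(\partial U)$. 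Since $\lvert Q_q^pF_{r+1}\rvert\le p^{O(\epsi^{-3(q-1)})}$, counting these triples gives at most $\exp(O(\epsi^{-6(q-1)}))$ possibilities; and distinct copies with $L_i^1\subset U$ give distinct longitude classes because $L_i^1$ is nontrivial relative to $L_i'$ but trivial relative to $L_j'$ for $j\neq i$ (the copies being split). The point is that the only geometric input is that $\pi_1(U)$ and $\pi_1(\partial U)$ have $O(\epsi^{-3})$ generators, which follows immediately from the tiling---no surfaces, towers, or area bounds are needed.
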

\noindent Here, as usual, the notation $O(f(x))$ refers to an arbitrary function which is bounded by $Cf(x)$ for some constant $C$; in each case, the constant depends on the link type.

For example, the Hopf link, the Borromean rings, and other Brunnian links are homotopically nontrivial, and therefore our results give exact asymptotics for them (up to a constant in the exponent).  The Whitehead link is an example of a homotopically trivial link with a nontrivial Milnor invariant of order $4$, so if $L$ is the Whitehead link we know that $n_L(\epsi)=\exp(O(\epsi^{-18}))$.  The methods of Theorem \ref{weak-upper-bound} can potentially be extended to some further links (for example, those which have a finite cover with a nontrivial Milnor invariant, as studied in \cite{COrr}) but this leaves many links for which we have no upper bound, for example any boundary link.

We do not know any lower bounds on the $\epsi$-diagonal packing number stronger than Theorem \ref{lower-bound}.  In fact, a slightly stronger hypothesis makes this bound sharp for any nontrivial link.  We say an embedding of a link is \emph{$\epsi$-thick} if the exponential map on the $\epsi/2$-neighborhood of the zero section of the normal bundle is still an embedding.  (Of course, such an embedding only exists for tame links.)  Define $n'_L(\epsi)$ to be the number of copies of $L$ that can be embedded in disjoint balls if every copy is required to be $\epsi$-thick.  The only difference from the original condition is that each component must be ``far away from itself''.  Then:
\begin{thm} \label{thick}
    For every tame link $L$ which is not the unlink, there are constants $C>c>0$ such that
    \[\exp(c\epsi^{-3}) \leq n'_L(\epsi) \leq \exp(C\epsi^{-3}).\]
\end{thm}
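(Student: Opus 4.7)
For the \emph{lower bound}, I would revisit the construction proving Theorem~\ref{lower-bound}. Any reasonable such construction should proceed by packing translated rescalings of a fixed smooth tame model of $L$ into $[0,1]^3$. If we fix such a model with normal injectivity radius $\rho_0 > 0$ and rescale by the factor $\epsi/(2\rho_0)$, the rescaled copy is tautologically $\epsi$-thick. Thus, provided every copy in the packing of Theorem~\ref{lower-bound} is of this form, the packing automatically consists of $\epsi$-thick copies and we get $n'_L(\epsi) \geq \exp(c\,\epsi^{-3})$ for free. If the construction is not already of this form, one could instead run it using an $\epsi$-thick model as input, which is available because $L$ is tame.

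For the \emph{upper bound}, note first that $\epsi$-thickness is strictly stronger than $\epsi$-diagonality, so $n'_L(\epsi) \leq n_L(\epsi)$. When $L$ is homotopically nontrivial, Theorem~\ref{sharp-upper-bound} already gives $n_L(\epsi) \leq \exp(C\epsi^{-3})$ and we are done. The real case is when $L$ is homotopically trivial but not the unlink --- for instance a boundary link or a Bing double of the Hopf link --- where no bound of the form $\exp(O(\epsi^{-3}))$ on $n_L$ is available, and for boundary links not even the weaker bound of Theorem~\ref{weak-upper-bound}.

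In this hard case the plan is to adapt the argument underlying Theorem~\ref{sharp-upper-bound} to the thick setting. Thickness provides each copy with an embedded $\epsi/2$-tube and a ropelength lower bound $\ell \geq c(L)\epsi$, features absent in the merely diagonal setting. A promising route is to extract a quantitative topological obstruction playing the role of a nonzero Milnor invariant by passing to a finite cover of the link complement (the Cochran--Orr framework alluded to after Theorem~\ref{weak-upper-bound}) in which the lifted link becomes homotopically nontrivial; the thickness should control the geometry of that cover at scale $\epsi$ so that Freedman's argument applies there with exponent $\epsi^{-3}$ and then descends. \emph{The main obstacle} is keeping this exponent at $\epsi^{-3}$ uniformly across all nontrivial tame links --- in particular for boundary links, where no Milnor-type invariant detects nontriviality, the thickness hypothesis must do genuinely new work beyond anything Theorems~\ref{sharp-upper-bound} and \ref{weak-upper-bound} can extract from diagonality alone.
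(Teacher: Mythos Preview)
Your lower bound sketch rests on a misconception. A packing by ``translated rescalings of a fixed smooth model'' can give at most $O(\epsi^{-3})$ copies, since each rescaled copy occupies a disjoint ball of volume on the order of $\epsi^3$. The construction in Theorem~\ref{lower-bound} is nothing like this: the $\exp(\Theta(\epsi^{-3}))$ copies are heavily intertwined geometrically (though they lie in topologically disjoint balls), with all copies threaded through a common system of small cubes. That said, the construction does produce $\epsi$-thick strands, as stated explicitly in Theorem~\ref{lower-bound}, so the lower bound of Theorem~\ref{thick} is immediate once one has the correct construction in hand.

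For the upper bound, your approach via finite covers and lifted Milnor invariants is not what the paper does, and as you yourself note, it has no chance for boundary links, where no Milnor-type invariant---even after passing to covers in the Cochran--Orr sense---detects nontriviality. The paper's argument is both simpler and uniform across all non-unlinks. The key is a direct geometric use of thickness: if two $\epsi$-thick knots $K_1, K_2$ lie within $\epsi/4$ of each other, then they cobound an embedded annulus (Lemma~\ref{knot_isotopy}, proved by showing the nearest-point projection $K_1 \to K_2$ is an injective immersion). One then tiles $I^3$ at scale $\epsi/10$, records for each component the cyclic sequence of $2$-faces it crosses---well-defined because thickness forces each small cube to be met in a single arc---and observes there are only $\exp(O(\epsi^{-3}))$ such sequences. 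If two copies $L_1, L_2$ shared the same sequence, then a component $K_1$ of $L_1$ not bounding a disk in $I^3\setminus L_1$ would cobound an annulus with the corresponding $K_2 \subset L_2$; since $L_1$ and $L_2$ lie in disjoint balls, $K_2$ bounds a singular disk in $I^3\setminus L_1$, and gluing this to the annulus and applying Dehn's lemma yields an embedded disk for $K_1$ in $I^3\setminus L_1$, a contradiction. Thickness does the ``genuinely new work'' you anticipated, but through elementary $3$-manifold topology rather than algebraic link invariants.
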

This suggests:
\begin{conj}
    For every link $L$ with nontrivial linking (that is, $L$ does not consist of a set of knots contained in disjoint balls), $n_L(\epsi)=\exp(O(\epsi^{-3}))$.
\end{conj}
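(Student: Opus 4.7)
The plan is to reduce the conjecture to the thick case treated in Theorem \ref{thick}: I would try to show that every $\epsi$-diagonal embedding of a non-split link $L$ can be replaced by a $c\epsi$-thick embedding inside a controlled enlargement of the same region, for some $c = c(L) > 0$. Granting such a reduction---say, that each copy of $L$ can be replaced by a thick copy in a topological ball whose volume is at most some constant multiple of the original---the bound from Theorem \ref{thick} applied at scale $c\epsi$ immediately yields $n_L(\epsi) \leq \exp(O(\epsi^{-3}))$, as conjectured.

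The core geometric statement I would attempt is the following thickening lemma: if $L'$ is an embedding of a non-split link in a topological $3$-ball $B$ with pairwise distance between components at least $\epsi$, then there is an ambient isotopy, supported in a bounded enlargement of $B$, carrying $L'$ to a $c\epsi$-thick embedding of the same link type. The non-split hypothesis is the only topological ingredient available to prevent self-proximity of a single component: intuitively, for a split link one can shrink each component to a point in a disjoint ball, whereas a non-split link must maintain a certain global geometric complexity. I would implement this by iteratively identifying places where one component comes close to itself and either pushing strands apart at a bounded cost in ball volume, or performing a local surgery that trades self-proximity for separation from other components, while controlling the growth in total ball volume by a multiplicative constant so that the disjointness of balls is only mildly degraded.

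The main obstacle is executing this thickening procedure quantitatively for \emph{boundary links}, where all Milnor invariants vanish and non-splitness is only visible through subtler invariants such as Alexander modules, higher-order signatures, or covering-link invariants in the sense of \cite{COrr}. Unlike linking numbers and Milnor invariants, these do not obviously come with integral or current-theoretic formulas of Gauss type, so converting non-splitness into a pointwise geometric constraint on self-distance is not automatic. One possible route is a contrapositive argument: show that if some component of $L'$ has very small self-distance, then a sequence of component-preserving isotopies reduces $L'$ to a split link, contradicting the hypothesis. Quantifying how many such reductions are possible as a function of $\epsi$ and the ambient volume is, I expect, the hardest step; a successful proof will probably require a new geometric incarnation of non-splitness that interacts cleanly with metric data, perhaps pulled back from a finite cover of the link exterior.
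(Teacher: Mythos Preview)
This statement is a \emph{conjecture} in the paper, not a theorem; the authors offer no proof and explicitly remark that they ``have no further evidence that the conjecture holds for boundary links or for that matter the Whitehead link.''  So your proposal is a strategy for an open problem, and there is nothing in the paper to compare it against.

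The strategy has a structural gap that is more basic than the boundary-link difficulty you identify.  Suppose your thickening lemma holds exactly as stated: each copy of $L$ in its ball $B_i$ can be isotoped, inside some $B_i' \supset B_i$ with $\vol(B_i') \leq C\vol(B_i)$, to a $c\epsi$-thick embedding.  Theorem~\ref{thick} still does not apply, because that theorem requires the copies to lie in \emph{disjoint} balls, and a bound on $\vol(B_i')$ in no way prevents the $B_i'$ from overlapping one another.  Indeed, in the packings produced by Theorem~\ref{lower-bound} the $B_i$ already have total volume close to $1$, so enlarging each one at all forces collisions.  Your phrase ``disjointness of balls is only mildly degraded'' is exactly the statement that would need proof, and nothing in the per-copy lemma addresses it.  Note also that the reduction cannot be carried out \emph{inside} the original $B_i$: the $B_i$ can be chosen arbitrarily thin around the strands, and a closed curve confined to a region of width $\delta$ must turn with curvature $\gtrsim 1/\delta$, hence cannot be $c\epsi$-thick once $\delta \ll \epsi$.

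Your diagnosis locates the obstacle in the subtlety of non-splitness for boundary links, but the gap above is already present for the Hopf link, where the conjecture is known by the completely different route of Theorem~\ref{sharp-upper-bound}.  The proofs of Theorems~\ref{sharp-upper-bound}, \ref{weak-upper-bound}, and the upper bound of Theorem~\ref{thick} all work \emph{globally}: they assign to each copy a combinatorial invariant (a coloring or tile sequence) taking $\exp(O(\epsi^{-3}))$ values and then bound how many copies can share a value.  A per-copy geometric normalization does not interact with this pigeonhole structure.  A proof of the conjecture would more plausibly require a new global invariant sensitive to non-splitness for links with vanishing Milnor invariants---which is exactly what the paper leaves open.
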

If $L$ is a disjoint union of knots, then one can $\epsi$-pack an arbitrary number of copies of $L$ by making the components arbitrarily tiny and placing them in disjoint neighborhoods; this accounts for the difference between the conjecture and Theorem \ref{thick}.  If $L$ has nontrivial linking, then there is no such simple solution.  However, we have no further evidence that the conjecture holds for boundary links or for that matter the Whitehead link.  Thus the possibility remains that $n_L(\epsi)$ is larger or even infinite for some nontrivial links.


\subsection{Relation to ropelength}
The \emph{ropelength} of a knot or link is, informally, the number of centimeters of $1$cm-thick rope required to tie it.  Formally, say that an embedding of a link is \emph{$\epsi$-thick} if the exponential map on the $\epsi/2$-neighborhood of the zero section of the normal bundle is still an embedding.  Then the ropelength of a link is the minimal total length of a $1$-thick embedding.  Ropelength and its relationship to invariants of knots and links has been studied by a number of researchers, for example in \cite{CKS, DEPZ, KM}.

In a similar vein, one can study the largest $\epsi$ such that a link has an $\epsi$-thick embedding into a unit cube.  By Weyl's tube formula, this invariant is bounded above by $(\text{ropelength})^{-1/3}$; conversely, it is clearly at least $(\text{ropelength})^{-1}$, though it is plausible that a better lower bound could be obtained.

The geometric conditions for diagonal $\epsi$-embedding can be thought of as a generalization of this condition in which only some components of the link are required to be separated.  In this context, the potential difference between diagonal $\epsi$-embedding and the variation in which each component must be $\epsi$-thick is particularly interesting.

\subsection{Proof ideas}
All of our upper bounds on $n_L(\epsi)$ and $n'_L(\epsi)$ use a trick also used by Freedman in \cite{Free}, as follows.  Fix an $\epsi$-diagonal embedding of $nL$, the union of $n$ copies of the $m$-component link $L$ contained in disjoint balls.  We tile the cubes by tiny cubes at scale $\sim \epsi$; then for each copy of $L$, the components are contained in disjoint sets of tiles.  We can think of this as giving a coloring of the tiling by $m+1$ colors, depending on which component, if any, intersects a given tile.  The number of possible colorings is $\exp(O(\epsi^{-3}))$.

In each case, for the proofs of Theorems \ref{sharp-upper-bound}, \ref{weak-upper-bound}, and the upper bound of Theorem \ref{thick}, the next step is to bound the number of copies of $L$ that can share the same coloring.  This determines the final bound.

The naturalness of this strategy is underscored by the construction of $\epsi$-diagonal embeddings that proves our lower bounds.  In this construction, we associate each copy of $L$ to a binary word of length $\sim \epsi^{-3}$, where each letter determines, for certain pairs of small cubes, which of the two has nontrivial intersection with a certain component of $L$.  In other words, in this construction, the map from copies of $L$ to colorings is injective.

\subsection{Acknowledgements}
The authors would like to thank Michael Freedman for encouraging them to write up these results, an anonymous referee for suggesting a simplification of the proof of Theorem \ref{sharp-upper-bound} and a number of other helpful comments, and Ben Chung, Merrick Hua, and Genevieve Yao for noticing several errors in previous drafts.  F.~M.\ would also like to thank Larry Guth and MIT for hosting him for a visit during which these many of these ideas were conceived.  Additionally, F.~M.\ was partially supported by NSF individual grant DMS-2204001 and a Sloan Fellowship.

\section{Exponential lower bound}

In this section, we construct diagonal $\epsi$-embeddings of exponentially many copies of a link, proving Theorem \ref{lower-bound} as well as the lower bound of Theorem \ref{thick}. Before describing the general construction for Theorem \ref{lower-bound}, we sketch a construction for the following weaker statement: For any number $N>0$ we can embed $2^{N^3}$ copies of the Hopf link $L$ into $100I^3 = [0,100]^3$ so that any two loops in the same copy of $L$ are at least distance $N^{-1}$ apart, and any two loops in different copies of $L$ are unlinked.

Consider a solid torus $U$, embedded in $100I^3$, whose cross sections have radius $3N^{-1}$ and whose longitude has length $N^2$. Divide the torus into $N^3$ disjoint boxes, each of which looks roughly like a cube of side length $N^{-1}$. Then divide each box into three sections, labeled $0, 1$ and $2$ so that sections $0$ and $2$ are distance roughly $N^{-1}$ apart in each box. Let $U'$ be a solid torus in $I^3$, disjoint from $U$, so that the longitudes of $U$ and $U'$ form a Hopf link. 

Begin by embedding $2^{N^3}$ copies of $L$ into $100I^3$ as follows. Label the copies of $L$ with $N^3$-letter binary words in the alphabet $\{0,2\}$. For a word $w$, let $w_j$ stand for its $j^{th}$ letter, and let $L^i_w$ be the $i^{th}$ strand in the copy of $L$ with label $w$. For each word $w$, let $L^1_w$ travel along the longitude of $U$ so that in the $j^{th}$ box of $U$, it passes through the section labeled $w_j$ and does not intersect the other sections. For each word $w$, let $L^2_w$ travel along the longitude of $U'$. All the copies of $L$ are now mutually linked, and we modify this embedding by adding tentacles to the curves $L^2_w$ to unlink certain pair of strands.

For each pair of distinct words $w, w'$ do the following modification. Pick an index $j$ such that $w_j \neq w'_j$. Band sum $L^2_w$ with a tentacle that only intersects $U$ in the $j^{th}$ box in section $w'_j$. In this section, the tentacle winds around $L^1_{w'}$ so as to unlink $L^1_{w'}$ from $L^2_w$. By making this tentacle thin enough we can ensure that it does not wind around any other strand in any other copy of $L$. The tentacle is distance at least $N^{-1}$ from $L^1_w$ because of the way it intersects $U$. Similarly, band sum $L^2_{w'}$ with a tentacle that only intersects $U$ in the $j^{th}$ box in section $w_j$ and winds around $L^1_w$ there. After this modification, $L^1_w$ and $L^2_w$ are still distance $N^{-1}$ apart, but now the $w$ and $w'$ copy of $L$ are unlinked. After completing all such modifications we get an $\epsi$-diagonal embedding of $2^{N^3}$ copies of $L$. This completes the sketch.

To go from pairwise unlinking the strands to placing all the links in disjoint balls, we would have to route the tentacles much more carefully, since most possible routings lead to complicated linking between groups of several strands. In the proof below, we give a method of routing strands which avoids this issue.

\begin{refthm}{\ref{lower-bound}}
  Given a tame link $L$, let $nL$ be the link consisting of $n$ copies of $L$ contained in disjoint balls.  Then there is a $C(L)>0$ such that for every $\epsi>0$, there is a diagonal $\epsi$-embedding of $nL$ with $n(\epsi)=\exp(C(L)\epsi^{-3})$.  Moreover, the embedding can be chosen so that each strand is $\epsi$-thick.
\end{refthm}

\begin{proof}
  The overall idea is as follows.  We start with a crossing diagram for $L$, and replace each strand with $n$ parallel strands.  This gives a link which satisfies the geometric condition for a diagonal $\epsi$-embedding, but is not isotopic to $nL$.  We then modify the link at each crossing in the diagram so that it is isotopic to $nL$ while maintaining the $\epsi$-separation between corresponding strands.  This can be done relatively easily because the crossings that need to be changed are between strands which are not required to be $\epsi$-separated.

  Now we give the details.  Start by choosing a diagram for $L$ and a specific realization of $L$ whose projection to the $xy$-plane is this diagram.  Let $L=\gamma^1 \cup \cdots \cup \gamma^s$.  Let $n=2^{(c/\epsi)^3}$, with $c=c(L)>0$ depending on other constants $c_j$ introduced throughout the proof, which will also implicitly depend on $L$.  We replace each $\gamma^i$ with copies $\gamma^i_w$, labeled by binary words of length $(c/\epsi)^3$, which are stacked vertically (that is, perpendicular to the plane of the diagram) within a $c_1$-neighborhood of $\gamma^i$.  The $\gamma^i_w$ are arranged in lexicographic order from top to bottom.

  Now we modify these curves within a $c_2$-neighborhood of each crossing between $\gamma^i$ and $\gamma^j$ (with $i$ and $j$ not necessarily distinct).  Our goal is to make the resulting link isotopic to a vertical stack of copies of $L$; we therefore need to flip every crossing in which $\gamma^i_w$ and $\gamma^j_{w'}$ are in the wrong order.

  Let $p$ be the point on $\gamma^i$ at which it crosses under $\gamma^j$, and consider a parallelepiped $P$ around $p$ of width and length $c_3$ and height $1$, with $c_3$ chosen so that $P$ is separated from all other crossings and strands of the link.  We divide $P$ into $r=(c/\epsi)^3$ sub-parallelepipeds $K_1,\ldots,K_r$ so that each $K_\ell$ is of dimensions at least $9\epsi \times 3\epsi \times 7\epsi$.  These are numbered boustrophedonically, so that $K_\ell$ and $K_{\ell+1}$ either share a face parallel to the $yz$-plane or are adjacent and on the exterior of the parallelepiped.

  Now fix a ribbon $R$ (i.e.~an embedding of a long thin rectangle) of width $5\epsi$ which traverses the $K_1,\ldots,K_r$ in order, such that $R \cap K_\ell$ lies in the central $xz$-plane of $K_\ell$, and such that its cross-section is also vertical everywhere.  Note that $R$ must exit $P$ to move between cubes with different $y$ or $z$ coordinates.  If we have chosen a reasonable ordering of the $K_\ell$ and a reasonable course for $R$ without gratuitous knots, then rerouting all the $\gamma^i_w$ and $\gamma^j_w$ in a neighborhood of the crossing so as to travel along $R$, while maintaining their vertical ordering, does not change the link type.

  In our final construction, the $\gamma^i_w$ and $\gamma^j_w$ will travel along $R$ or very close to it while maintaining the following properties:
  \begin{enumerate}
  \item \textbf{Geometric constraint:} Each curve will be $\epsi$-thick, and $\gamma^i_w$ will be $\epsi$-separated from $\gamma^j_w$.
  \item \textbf{Topological constraint:} The intersection of the link with a neighborhood of $P$ will be isotopic as a tangle (that is, relative to the boundary) to a vertical stack in which each $\gamma^j_w$ crosses over the corresponding $\gamma^i_w$, and all strands are arranged in lexicographic order from top to bottom.  Thus the $\gamma^j$ and $\gamma^i$ strands alternate vertically.
  \end{enumerate}
  We do this in two steps.  We will start by building a set of curves lying in $R$ and satisfying the geometric constraint.  In this set of curves, some curves will actually intersect each other.  We will then show that we can resolve these intersections by jostling the curves in such a way that the topological constraint is also satisfied.  This movement can be arbitrarily $C^1$-tiny and therefore the geometric constraint will still be satisfied.

  We start by fixing the geometry.  We assume that $R$ is sufficiently nice geometrically that the geometric constraint can easily be satisfied outside the $K_\ell$, where we only mandate that the curves are arranged vertically with the $\gamma^j_w$ in lexicographic order above and the $\gamma^i_w$ in lexicographic order below.  Inside the $K_\ell$, the behavior will depend on the $\ell$th letter of $w$, $w_\ell$.  Identify $K_\ell \cap R$ with $[0,9\epsi] \times [0,5\epsi]$.  If $w_\ell=0$, then $\gamma^j_w$ travels through $[0,9\epsi] \times [4\epsi,5\epsi]$ and $\gamma^i_w$ travels through $[0,9\epsi] \times [2\epsi,3\epsi]$.  On the other hand, if $w_\ell=1$, then $\gamma^j_w$ starts with its $z$-coordinate in $[4\epsi,5\epsi]$, travels in that band until $x \in [3\epsi,4\epsi]$, dips down until $z \in [2\epsi,3\epsi]$, travels right until $x \in [5\epsi,6\epsi]$, and then travels back up until $z \in [4\epsi,5\epsi]$ and exits with the rest of the $\gamma^j_w$.  Similarly, $\gamma^i_w$ lies in
  \[([0,2\epsi] \times [2\epsi,3\epsi]) \cup ([\epsi,2\epsi] \times [0,3\epsi]) \cup ([\epsi,8\epsi] \times [0,\epsi]) \cup ([7\epsi,8\epsi] \times [0,3\epsi]) \cup ([7\epsi,9\epsi] \times [2\epsi,3\epsi]).\]
  The $\gamma^j_w$ and $\gamma^i_w$ maintain their relative vertical orderings with respect to others with the same $\ell$th letter.  Moreover, while $x \in [4,5]$, the $\gamma^j_w$ with $w_\ell=1$ and $\gamma^i_w$ with $w_\ell=0$ are arranged in lexicographic order with respect to each other.  See Figure \ref{fig:crossingsLeft} for an illustration.

  If we haven't done anything silly, we now have curves satisfying the geometric constraint, but with nontrivial intersections: whenever $w' \neq w$, $\gamma^i_{w'}$ and $\gamma^j_w$ intersect each other twice for every $\ell$ such that $w(\ell)=1$ and $w'(\ell)=0$.  The $\gamma^j_w$ often intersect each other as well, as do the $\gamma^i_w$.  To finish the construction, we resolve all these intersections into crossings so as to satisfy the topological constraint.
  \begin{figure}
      \centering
      \begin{tikzpicture}[scale=0.7]
        \draw[help lines] (-0.5,1) grid (18.5,6);
        
        \tikzstyle{blueline}=[draw=white, double=blue, very thick, double distance=1.2pt];
        \tikzstyle{redline}=[draw=white, double=red, very thick, double distance=1.2pt];
        \node[anchor=north] at (4.5,0) {$K_1$};
        \node[anchor=north] at (13.5,0) {$K_2$};
        \draw [redline] (-0.5,5.2) -- (2.5,5.2) .. controls (3.3,5.2) .. (3.3,4.5) .. controls (3.3,3.2) .. (4.5,3.2);
        \draw [redline] (-0.5,5.4) -- (2.5,5.4) .. controls (3.7,5.4) .. (3.7,4.5) .. controls (3.7,3.4) .. (4.5,3.4);
        \draw [redline] (-0.5,5.6) -- (11.5,5.6) .. controls (12.7,5.6) .. (12.7,4.5) .. controls (12.7,3.6) .. (13.5,3.6);

        \draw [blueline] (-0.5,3.8) -- (18.5,3.8);
        \draw [blueline] (-0.5,3.6) -- (9.5,3.6) .. controls (10.7,3.6) .. (10.7,2.5) .. controls (10.7,1.7) .. (11.5,1.7) -- (15.5,1.7);
        \draw [blueline] (18.5,3.6) -- (17.5,3.6) .. controls (16.3,3.6) .. (16.3,2.5) .. controls (16.3,1.7) .. (15.5,1.7);
        \draw [redline] (9,5.2) -- (11.5,5.2) .. controls (12.3,5.2) .. (12.3,4.5) .. controls (12.3,3.2) .. (13.5,3.2);
        \draw [blueline] (-0.5,3.4) -- (0.5,3.4) .. controls (1.7,3.4) .. (1.7,2.5) .. controls (1.7,1.7) .. (2.5,1.7) -- (6.5,1.7);
        \draw [blueline] (18.5,3.4) -- (8.5,3.4) .. controls (7.3,3.4) .. (7.3,2.5) .. controls (7.3,1.7) .. (6.5,1.7);
        \draw [blueline] (-0.5,3.2) -- (0.5,3.2) .. controls (1.3,3.2) .. (1.3,2.5) .. controls (1.3,1.3) .. (2.5,1.3) -- (6.5,1.3);
        \draw [blueline] (9,3.2) -- (9.5,3.2) .. controls (10.3,3.2) .. (10.3,2.5) .. controls (10.3,1.3) .. (11.5,1.3) -- (15.5,1.3);
        \draw [blueline] (9,3.2) -- (8.5,3.2) .. controls (7.7,3.2) .. (7.7,2.5) .. controls (7.7,1.3) .. (6.5,1.3);
        \draw [blueline] (18.5,3.2) -- (17.5,3.2) .. controls (16.7,3.2) .. (16.7,2.5) .. controls (16.7,1.3) .. (15.5,1.3);
        
        \draw [redline] (-0.5,5.8) -- (18.5,5.8);
        \draw [redline] (18.5,5.6) -- (15.5,5.6) .. controls (14.3,5.6) .. (14.3,4.5) .. controls (14.3,3.6) .. (13.5,3.6);
        \draw [redline] (18.5,5.4) -- (6.5,5.4) .. controls (5.3,5.4) .. (5.3,4.5) .. controls (5.3,3.4) .. (4.5,3.4);
        \draw [redline] (18.5,5.2) -- (15.5,5.2) .. controls (14.7,5.2) .. (14.7,4.5) .. controls (14.7,3.2) .. (13.5,3.2);
        \draw [redline] (9,5.2) -- (6.5,5.2) .. controls (5.7,5.2) .. (5.7,4.5) .. controls (5.7,3.2) .. (4.5,3.2);
        \draw (0,0) -- (0,7) -- (18,7) -- (18,0) -- cycle (9,0) -- (9,7);
      \end{tikzpicture}
      \captionsetup{singlelinecheck=off}
      \caption[.]{The pattern of curves and crossings on the ribbon $R$ for words of length $2$.  From the top going down at either endpoint, we have:
      \[\gamma^j_{00}, \gamma^j_{01}, \gamma^j_{10}, \gamma^j_{11}, \gamma^i_{00}, \gamma^i_{01}, \gamma^i_{10}, \gamma^i_{11}.\]
      } \label{fig:crossingsLeft}
  \end{figure}
  \begin{figure}
      \centering
      \begin{tikzpicture}[scale=0.3]
        \tikzstyle{blueline}=[draw=white, double=blue, very thick, double distance=1.2pt];
        \tikzstyle{redline}=[draw=white, double=red, very thick, double distance=1.2pt];
        \draw [redline] (-0.5,7.5) -- (10.5,7.5);
        \draw [redline] (-0.5,4.5) .. controls (1,4.5) .. (1,2) arc(180:270:1) -- (8,1);
        \draw [redline] (-0.5,5.5) -- (0,5.5) .. controls (1.5,5.5) .. (1.5,3) arc(180:270:1) -- (7.5,2);
        \draw [redline] (-0.5,6.5) -- (0.5,6.5) .. controls (2,6.5) .. (2,4) arc(180:270:1) -- (7,3);
        \draw [blueline] (-0.5,3.5) -- (10.5,3.5);
        \draw [blueline] (-0.5,2.5) -- (10.5,2.5);
        \draw [blueline] (-0.5,1.5) -- (10.5,1.5);
        \draw [blueline] (-0.5,0.5) -- (10.5,0.5);
        \draw [redline] (10.5,4.5) .. controls (9,4.5) .. (9,2) arc(0:-90:1);
        \draw [redline] (10.5,5.5) -- (10,5.5) .. controls (8.5,5.5) .. (8.5,3) arc(0:-90:1);
        \draw [redline] (10.5,6.5) -- (9.5,6.5) .. controls (8,6.5) .. (8,4) arc(0:-90:1);
      \end{tikzpicture}
      \caption{By inspection, the tangle in Figure \ref{fig:crossingsLeft} is isotopic to this one.  Now think of the red strands as moving from back to front as they go left to right.}
      \label{fig:crossings}
  \end{figure}
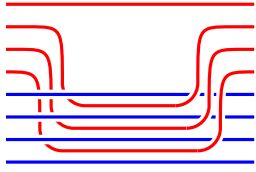
  \begin{lem}
      The intersections can be resolved into over- and undercrossings so that the resulting tangle inside the cube is equivalent via an isotopy fixing the boundary to the tangle $T_0(|w|)$ in which segments of $\gamma^j_w$ crossing back to front alternate vertically with segments of $\gamma^i_w$ crossing left to right, with the $w$ in lexicographic order from top to bottom.
  \end{lem}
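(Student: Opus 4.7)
The plan is to prescribe a single crossing convention, show that every pair of intersections inside a cell $K_\ell$ cancels via Reidemeister II, and then identify the residual tangle with $T_0(|w|)$.

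First, I fix the convention: at every intersection between a $\gamma^j$-strand and a $\gamma^i$-strand, let the $\gamma^j$ pass over; at every intersection between two strands of the same color, let the strand with the lex-smaller label pass over. The first rule turns every dip of a $\gamma^j_w$ (for $w_\ell=1$) through the $\gamma^i$-layer in $K_\ell$ into a pair of overcrossings over each $\gamma^i_{w'}$ with $w'_\ell=0$ that lies in its way, while the second rule globally orders every pair of same-color strands the same way at every meeting.

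Second, I observe that within each cell $K_\ell$ the strands enter on the left and exit on the right in the same vertical (lex) order---all $\gamma^j$ above all $\gamma^i$---so any reordering that occurs inside the cell must be undone before exit. Consequently every pair of strands that meets inside $K_\ell$ meets an even number of times, and the convention above forces the crossings in each matched pair to have the same sign. Each such pair therefore cancels by Reidemeister II. The two-layer structure of the cell, with top strands at $z\in[4\epsi,5\epsi]$ and middle strands at $z\in[2\epsi,3\epsi]$ and the lex order maintained within each horizontal slice, guarantees that the Reidemeister disks for these cancellations can be chosen either nested or disjoint, so all of them may be performed simultaneously.

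Third, once every cell-internal intersection has been cancelled, the crossings that remain come only from the global routing of the ribbon $R$ through $P$, which by design replicates the original crossing of $\gamma^j$ over $\gamma^i$ once for each $w$, with no cross-pair linking. A final ambient isotopy, of the type pictured in Figure \ref{fig:crossings}, then rearranges the residual tangle to $T_0(|w|)$ while fixing the prescribed boundary configuration.

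The main obstacle is the compatibility part of Step 2: verifying that all the R2 cancellations can be carried out at once without the ambient-isotopy disks interfering. The consistent over/under convention is exactly what is needed for this, but one still has to check, slice by slice inside $K_\ell$, that the strands to be pulled apart lie on a common side of every other strand involved in its matched pair. Once this bookkeeping is set up, the isotopy to $T_0(|w|)$ reduces to the explicit picture in Figure \ref{fig:crossings}.
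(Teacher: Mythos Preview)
Your crossing convention is the fatal issue. With ``$\gamma^j$ always over $\gamma^i$'' (and lex-smaller over lex-larger for same color), you are correct that every pair of intersections inside a cell $K_\ell$ cancels by Reidemeister II: the strands enter and leave each $K_\ell$ in the same vertical order, and a consistent over/under rule makes each matched pair removable. But once everything cancels, the tangle is isotopic rel boundary to the tangle with no dips at all---that is, to the \emph{cabled} crossing in which every $\gamma^j_w$ passes over every $\gamma^i_{w'}$. This is \emph{not} $T_0$: in $T_0$ the strands alternate $\gamma^j_w,\gamma^i_w$ in lex order of $w$, so for instance $\gamma^j_{11}$ must lie \emph{below} $\gamma^i_{00}$, whereas in the cabled crossing it lies above. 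Your Step~3 misreads what the ribbon routing contributes: by the paper's own setup, rerouting along $R$ without the dips ``does not change the link type,'' and the link type beforehand is precisely the cabled one. There is no hidden global crossing structure in $R$ that produces the interleaving for you.

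The whole content of the lemma is that the dips can be resolved so as \emph{not} to cancel---so that, summed over the cells, the net effect for each pair $w>w'$ is to drag $\gamma^j_w$ beneath $\gamma^i_{w'}$. The paper does this by induction on $|w|$: in $K_1$ the strands $\gamma^j_{1v}$ are resolved \emph{under} the strands $\gamma^i_{0v'}$ (the opposite of your rule), while $K_2,\ldots,K_{|w|}$ carry two disjoint copies of the length-$(|w|-1)$ tangle, one placed in front of the other according to the first letter. The inductive isotopy then rearranges these into $T_0(|w|)$. A uniform ``$\gamma^j$ over $\gamma^i$'' rule cannot accomplish this; the resolution has to depend on the lexicographic relation between the words, not merely on the color.
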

  \begin{proof}
      We prove this by induction on the length of $w$.  The case $|w|=2$ is illustrated in Figure \ref{fig:crossings}.  Now suppose we have the lemma for $|w|=L$.  The tangle for words of length $L+1$ can be produced as follows.  Take two copies of the tangle for words of length $L$; call them $T$, identified with words of the form $0w$, and $T'$, identified with words of the form $1w$.  Place $T'$ in front of $T$, and connect this on the left (in $K_1$) with another tangle in which all strands $\gamma^j_w$ for $w$ starting with $1$ are routed under all strands $\gamma^i_{w'}$ for $w'$ starting with $0$.  The inductive step going from $|w|=1$ to $|w|=2$ can again be observed in Figure \ref{fig:crossingsLeft}.

      By induction, the right side is isotopic via an isotopy fixing the boundary to two copies of $T_0(L)$, one placed in front of the other.  After performing this isotopy, we can further isotope the full tangle to $T_0(L+1)$ using the following steps:
      \begin{enumerate}
          \item Move the copy of $T_0(L)$ corresponding to words starting with $1$ underneath the one corresponding to words starting with $0$, except for the rightmost part which must stay fixed.
          \item At the portion of the tangle around the boundary between $K_1$ and $K_2$, push the strands $\gamma^j_{1w}$ below all strands $\gamma^i_{0w'}$, so that they do not rise back up after dipping to their lowest point inside $K_1$.
      \end{enumerate}
      After these two steps, the tangle diagram is isomorphic to that of $T_0(L+1)$.
  \end{proof}
  Resolving the intersections can be done with arbitrarily $C^1$-small perturbations of the curves, and these preserve the geometric constraint.  Assuming we have given the curves $\gamma^i$ and $\gamma^j$ local orientations so that $\gamma^j$ crosses over $\gamma^i$ in a positive crossing, the lemma gives our tangle the desired topology.  Therefore this completes the construction.
\end{proof}

\section{Packing bounds for thick embeddings}

In this section we prove the upper bound of Theorem \ref{thick}.  That is, we show that the exponential bound for $n_L(\epsi)$ from the previous section is tight for any non-trivial link when we assume that every component has an $\epsi$-thick embedding.

\begin{lem} \label{knot_isotopy} Suppose $K_1$ and $K_2$ are two smoothly embedded knots which have $\epsi$-thick embeddings into $[0,1]^3 = I^3$. If $K_1$ lies in the $\epsi/4$-neighborhood of $K_2$, then there is an embedded annulus in $I^3$ with boundary $K_1 \cup K_2$ which is also contained in the $\epsi/4$-neighborhood of $K_2$.
\end{lem}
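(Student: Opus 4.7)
The plan is to build the annulus as a ruled surface
\[
A = \bigcup_{p \in K_1} [p, \pi_2(p)],
\]
where $\pi_2$ denotes nearest-point projection onto $K_2$ and each segment is the straight line in the normal disk to $K_2$ at $\pi_2(p)$.  Since $K_2$ is $\epsi$-thick, its $\epsi/2$-neighborhood is an embedded tubular neighborhood, so the $\epsi/4$-neighborhood $V$ of $K_2$ is a solid torus foliated by normal disks $D_t$ of radius $\epsi/4$; each segment of $A$ then lies in $V$, and if $\pi_2|_{K_1}$ is a homeomorphism onto $K_2$ the segments lie in pairwise disjoint cross-sections, so $A$ is an embedded annulus in $V$ with boundary $K_1 \cup K_2$.

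The bulk of the work is therefore to show that $\pi_2|_{K_1}$ is a homeomorphism, which I split into surjectivity (nonzero winding number) and injectivity.  If the winding number were zero, then $K_1$ would lift to a closed loop in the universal cover $\RR \times D^2_{\epsi/4}$ of $V$; but the curvature bound $\leq 2/\epsi$ coming from $\epsi$-thickness prevents a closed $\epsi$-thick curve from fitting in a cylinder of cross-sectional radius $\epsi/4$, since at its longitudinal extremum the osculating circle has radius $\geq \epsi/2 > \epsi/4$ and cannot fit in the cross-section.  For injectivity, suppose $\pi_2(p) = \pi_2(q)$ for distinct $p, q \in K_1$; then $p, q \in D_t$ forces $d(p,q) \leq \epsi/2$, so the midpoint $m$ of $\overline{pq}$ lies within $\epsi/4$ of $K_1$ and, by $\epsi$-thickness of $K_1$, has a unique nearest point $r \in K_1$.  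The equality $d(m,p) = d(m,q)$ together with uniqueness forces $r \neq p, q$, yielding a third point of $K_1$ strictly closer to $m$.  Feeding the pair $(p, r)$, of strictly smaller mutual distance, back into the argument yields a shrinking sequence of such configurations, which I expect to contradict the smooth embeddedness of $K_1$ via an accumulation argument near $m$.

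Once $\pi_2|_{K_1}$ is shown to be a covering of degree $\pm 1$, hence a diffeomorphism, the annulus $A$ is automatically embedded and contained in $V$, and the proof is complete.  The main obstacle I expect is the last step of the injectivity argument: if the naive iteration does not cleanly converge, a more direct approach should use the reach estimate $|(q-p)-((q-p)\cdot\tau_p)\tau_p|\leq d(p,q)^2/\epsi$ (valid for reach $\geq \epsi/2$) to force both tangent vectors of $K_1$ at $p$ and $q$ to lie nearly in $D_t$, and then combine the curvature bound on $K_1$ with $K_1 \subset V$ to derive a contradiction from the fact that $K_1$ near $p$ must approximate a circle of radius $\geq \epsi/2$ that does not fit in the thin tube $V$.
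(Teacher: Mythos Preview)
Your annulus construction via normal projection $\pi_2:K_1\to K_2$ is exactly the paper's, and the reduction to showing $\pi_2|_{K_1}$ is a diffeomorphism is correct. The difference is in how you decompose that last claim: you aim for (winding number $\neq 0$) $+$ (injectivity), whereas the paper proves the stronger statement that $\pi_2$ is an \emph{immersion}, and then deduces injectivity almost for free (an immersion $S^1\to S^1$ of degree $\geq 2$ would force $K_1$ to meet a small ball around a point of $K_2$ in two components, contradicting the elementary fact that an $\epsi$-thick knot meets any ball of radius $\leq \epsi/2$ in a single arc).

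Both of your two steps, as written, share the same gap. In the winding-number argument you assert that at a longitudinal extremum ``the osculating circle has radius $\geq \epsi/2$ and cannot fit in the cross-section.'' The radius bound is correct, but the osculating \emph{plane} need not be the cross-sectional disk: at a maximum of the longitudinal coordinate $s$ the tangent lies in $D_{s_0}$, but the principal normal can point partly (or entirely) in the $\partial_s$ direction, so the osculating circle can bend back into the tube rather than out of it. Your proposed fallback for injectivity (use the reach estimate to force the tangent of $K_1$ at $p$ nearly into $D_t$, then argue the curve approximates a large circle that cannot fit in $V$) lands at exactly this same unjustified step. The midpoint iteration you describe does produce a strictly closer point $r\in K_1$ to $m$, but there is no reason $\pi_2(r)=\pi_2(p)$, so the argument cannot be iterated as stated.

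The paper closes this gap by exploiting the thickness of \emph{both} curves simultaneously. At a critical point $q$ of $\pi_2$ it places (i) a torus of $\epsi/2$-balls tangent to $K_1$ at $q$ (which $K_1$ cannot re-enter, by the tangent-ball part of the connectedness lemma) and (ii) an $\epsi/4$-ball centered on the forward ray of $K_1$ at distance $\epsi/2$ from $\pi_2(q)$ (which $K_1$ cannot enter because $K_2$, being $\epsi$-thick, avoids the corresponding $\epsi/2$-ball). These two forbidden regions overlap and trap $K_1$ in a bounded pocket, a contradiction. This trapping argument is the missing ingredient in your approach; once you have immersion, your injectivity step can be replaced by the one-line degree argument above.
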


\begin{proof}
From the definition of a $\epsi$-thick embedding, we see that every point in an $\epsi/2$ neighborhood of $K_2$ has a unique nearest neighbor in $K_2$. So in this neighborhood there is a well-defined projection along the normal bundle of $K_2$ to the zero section of the normal bundle. Let $\pi: K_1 \to K_2$ be this projection on $K_1$. To find our annulus it suffices to show that $\pi$ is injective: then the linear homotopy from $K_1$ to $\pi(K_1)$ traces out an embedded annulus.

We will repeatedly use the following auxiliary lemma:
\begin{lem} \label{lem:connected}
  The intersection of $K_i$ with any open $\delta$-ball is connected, for $\delta \leq \epsi/2$.  Moreover, if a $\delta$-ball $B$ is tangent to $K_i$ at a point $q$, then $B \cap K_i$ is empty.
\end{lem}
\begin{proof}
    For the first statement, suppose not, and let $x$ be the center of the ball.  Then there are two points $p_1,p_2 \in K_i$ such that $d(p_i,x)<\delta$, and $f(p)=d(p,x)$ as a function $K_i \to [0,\infty)$ attains a local minimum at $p_i$.  Therefore $x$ is in the radius $\delta$ normal disk to $K_i$ around both $p_1$ and $p_2$, contradicting the thickness of $K_i$.

    For the second statement, we can apply the same argument in a ball shifted very slightly in the direction of $q$.  In this ball, $q$ is a local minimum of the distance to the center; if $B \cap K_i$ is nonempty, then we can find a second local minimum.
\end{proof}

\begin{figure}
  \centering
  \begin{tikzpicture}[scale=3]
    \clip (-1.5,-1.3) rectangle (1.5,0.3);
    \fill[color=gray!30!white] (0,-1) circle(0.5);
    \filldraw[fill=gray!30!white] (-1,-0.15) circle(1);
    \filldraw[fill=gray!30!white] (1,-0.15) circle(1);
    \draw (0,-1) circle(0.5);
    \draw (0,-1) circle(1);
    \draw (0,0) -- (0,-1);
    \draw (-1,-0.15) -- (1,-0.15);
    \draw[red, very thick] (0,0) arc(90:140:1.1) arc(-40:-80:1.5) (0,0) arc(-90:-70:2) node[anchor=north west] {$K_2$} arc(-70:-60:4);
    \draw[blue, very thick] (0,-0.15) arc(180:150:1.2) (0,-0.15) arc(180:190:1.9) node[anchor=south west] {$K_1$};
    \filldraw[black] (0,0) circle (0.5pt) node[anchor=south east]{$p$};
    \filldraw[black] (0,-0.15) circle (0.5pt) node[anchor=north east]{$q$};
    \filldraw[black] (0,-1) circle (0.5pt) node[anchor=east]{$p'$};
    \filldraw[black] (-1,-0.15) circle (0.5pt) node[anchor=east]{$T$};
    \filldraw[black] (1,-0.15) circle (0.5pt) node[anchor=west]{$T$};
  \end{tikzpicture}
  \caption{A cross-section of the region from which $K_1$ is excluded, which is rotationally symmetric about the line $qp'$.  Since $\sqrt{2}<3/2$, the disks overlap even when $p=q$.}
  \label{fig:balls}
\end{figure}
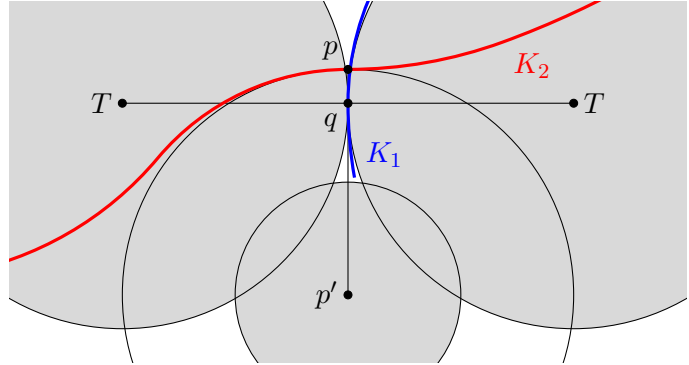

We first show that $\pi$ is an immersion.  Suppose not, so there is a point $q \in K_1$ such that $\pi'(q)=0$.  Let $p=\pi(q)$, let $T$ be the circle of radius $\epsi/2$ around $q$ in the normal plane to $K_1$ at $q$, and let $\mathbf v$ be a tangent vector to $K_1$ at $q$ which makes an obtuse or right angle with $pq$.  Finally, let $p'$ the point at distance $\epsi/2$ from $p$ along the ray from $q$ in the direction of $\mathbf v$.  By Lemma \ref{lem:connected}, $K_1$ cannot enter the $\epsi/2$-neighborhood of $T$.  Likewise, $K_2$ cannot enter the $\epsi/2$-neighborhood of $p'$, and therefore $K_1$ cannot enter the $\epsi/4$-neighborhood of $p'$.  These two neighborhoods together cut out a bounded region which $K_1$ cannot exit after it passes through $q$ in the direction of $p'$ (see Figure \ref{fig:balls}).

Now we have shown that $\pi$ is an immersion.  If $\pi$ is not injective, this means that $K_1$ winds multiple times around $K_2$.  Therefore we can find a ball $B$ of radius $\epsi/4$ centered at some point $q \in K_2$ such that $K_1 \cap B$ has at least two connected components, contradicting Lemma \ref{lem:connected}.
\end{proof}

\begin{refthm}{\ref{thick}}
  Let $L$ be any tame link which not isotopic to the unlink. Let $nL$ be the link consisting of $n$ copies of $L$ contained in disjoint balls. Then for every $\epsi>0$ and every $\epsi$-thick embedding of $nL$, we have $n=\exp(O(\epsi^{-3}))$.
\end{refthm}

\begin{proof}
Fix an $\epsi$-diagonal embedding of $nL$ with $\epsi$-thick components.

Tile $I^3$ by cubes of sidelength $\epsi/10$. After a tiny perturbation of the tiles, we can assume that the 2-faces of the tiles intersect the embedding transversely. To each component of each copy of $L$ associate a sequence of 2-faces which it crosses. Since each component is an embedded circle with an embedded $\epsi$-thick tubular neighborhood, it meets each cube in one segment. So this sequence is well-defined up to a cyclic permutation. There are $O(\exp(\epsi^{-3}))$ choices for such a sequence. We will show that two copies of a link cannot have the same associated sequence, and the theorem will follow.

Suppose for contradiction that two copies of our link, $L_1$ and $L_2$, have the same associated sequence of two-faces.  Consider the 3-manifold with boundary $M$ obtained by blowing up $I^3$  at $L_1$, that is by replacing $L_1$ by its unit normal bundle. Without loss of generality, since $L$ is not isotopic to the unlink, we can assume that the first component of $L_1$ does not bound a disc in $I^3 - L_1$. Let $K_i$ denote the first component of $L_i$. Since $L_1$ and $L_2$ have the same associated sequence, $K_1$ lies in the $\epsi/4$-neighborhood of $K_2$ and so we can apply Lemma \ref{knot_isotopy} to them. In particular there is an embedded annulus $A \subset I^3$ with $\partial A = K_1 \cup K_2$, and this can be lifted to an annulus $\tilde A \subset M$ with $\partial \tilde A=\tilde K_1 \cup K_2$. Since we assumed that $L_1$ and $L_2$ are separated by a sphere, we can find a piecewise linear map $F: D^2 \to M^{\circ}$ so that $F(\partial D^2) = K_2$. If we apply Dehn's lemma \cite[Theorem 1]{Papakyriakopoulos} to the disc $A \cup F(D^2)$, we obtain an embedded disc in $M$ with boundary $\tilde K_1$ whose interior lies in the interior of $M$. This means that $K_1$ bounds an embedded disc in $I^3 - L_1$, which is a contradiction. 
\end{proof}

\section{Background on Milnor invariants}

Milnor introduced his eponymous link invariants in his PhD thesis, which was published as \cite{Milnor,Milnor2}.  They can be thought of as higher-order generalizations of the linking number between two strands.  There are several ways to make sense of this.

One definition of the linking number is as follows: the linking number between curves $\gamma_1$ and $\gamma_2$ in $I^3$ is the class of $\gamma_1$ in the abelianization of $\pi_1(I^3 - \gamma_2)$.  Similarly, the Milnor invariants of an $m$-component link introduced in \cite{Milnor} measure the class of a loop $\gamma_1$ in a lower central series quotient of $\pi_1(I^3 - \gamma_2 - \cdots - \gamma_m)$.

The definition of the linking number between $\gamma_1$ and $\gamma_2$ can be rephrased using the presentation of the longitude of $\gamma_1$ in terms of the meridians of the two curves.  While the advantage of this more complicated formulation is not immediately clear, it generalizes to a larger class of link invariants introduced in \cite{Milnor2}.

Finally, one can understand the linking number as the coefficient of the cup product of cohomology classes in the complement Alexander dual to $\gamma_1$ and $\gamma_2$.  Similarly, Milnor invariants can be reformulated in terms of higher cohomology operations, an idea first suggested by Stallings \cite{Stall} and later developed by Turaev \cite{Turaev} and Porter \cite{Porter}.

We now give the precise definitions and the technical lemmas needed to make them work.

\subsection{General Milnor invariants}
Given a group $G$ define the lower central series by letting $\Gamma_1 G=G$, $\Gamma_2 G = [G, G]$, and for $q > 2$, letting $\Gamma_q G = [G, \Gamma_{q-1} G]$. Denote $Q_qG = G/\Gamma_q G$.  
In \cite{Milnor2}, Milnor extended a result of Chen and proved,

\begin{thm}[{\cite[Theorem 2]{Milnor2}}]
  If $L$ and $L'$ are two isotopic links in $I^3$, then for all $q \ge 1$,
  \[Q_q\pi_1(I^3 - L) \cong Q_q\pi_1(I^3 - L').\]
\end{thm}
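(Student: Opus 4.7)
The plan is to reduce the statement to Stallings' theorem on the lower central series \cite{Stall}: if a group homomorphism $\ph \colon G \to H$ induces an isomorphism on $H_1(-;\ZZ)$ and a surjection on $H_2(-;\ZZ)$, then it induces an isomorphism $Q_q G \xrightarrow{\sim} Q_q H$ for every $q \ge 1$. So the task becomes producing a space $W$ containing both complements as subspaces whose inclusions satisfy these homological hypotheses.

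Concretely, given an isotopy from $L$ to $L'$, I would take its trace, an embedded (or in Milnor's more general setting, self-transverse immersed) family of annuli $\Sigma \subset I^3 \times [0,1]$ with $\Sigma \cap (I^3 \times \{0\}) = L \times \{0\}$ and $\Sigma \cap (I^3 \times \{1\}) = L' \times \{1\}$. Let $W = (I^3 \times [0,1]) \setminus N(\Sigma)$, where $N(\Sigma)$ is a small open tubular neighborhood. Then $W$ is a $4$-dimensional cobordism from $X := I^3 \setminus L$ to $X' := I^3 \setminus L'$, and it suffices to show that both inclusions $\iota \colon X \hookrightarrow W$ and $\iota' \colon X' \hookrightarrow W$ satisfy the Stallings hypothesis; then
\[
Q_q \pi_1(X) \xrightarrow{\sim} Q_q \pi_1(W) \xleftarrow{\sim} Q_q \pi_1(X').
\]

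The $H_1$ side is straightforward: by Alexander duality in $I^3$, each of $H_1(X)$ and $H_1(X')$ is freely generated by meridians of the components of $L$ and $L'$, and by Alexander duality in $I^3 \times [0,1]$, $H_1(W) \cong \ZZ^m$ is freely generated by meridians of the components of $\Sigma$; the inclusions send meridians to meridians, giving isomorphisms.

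The main work, and the likely main obstacle, is the $H_2$ surjectivity. I would attack it with a Mayer--Vietoris sequence for the decomposition $I^3 \times [0,1] = W \cup N(\Sigma)$. Using that $N(\Sigma)$ deformation retracts to $\Sigma$, which is a disjoint union of annuli (in the honest isotopy case) or immersed cylinders (more generally), one controls $H_2(N(\Sigma))$ and $H_2(N(\Sigma) \cap W)$ explicitly, and the resulting exact sequence pins down $H_2(W)$ in terms of $H_2(I^3 \times [0,1]) = 0$ together with the boundary tori. In the embedded case this shows every class of $H_2(W)$ is already represented in $H_2(X)$ (the boundary tori of the components), so the hypothesis holds. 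If one wants Milnor's statement in the self-transverse setting (link-homotopy), one must additionally check that the local model near each double point does not contribute a new $H_2$ class, which is the delicate piece. Once $H_2$ is handled, Stallings' theorem immediately yields the isomorphism of $Q_q$ quotients.
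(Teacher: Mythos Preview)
The paper does not supply a proof of this statement; it is quoted as background from \cite[Theorem~2]{Milnor2}, so there is no in-paper argument to compare against.  Milnor's original 1957 proof predates Stallings' theorem and proceeds by direct algebraic manipulation of presentations of the link group.

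Your route via Stallings' theorem is nonetheless valid and is by now the standard modern argument---it is essentially the Stallings--Casson proof that the nilpotent quotients of the link group (hence the $\overline\mu$-invariants) are concordance invariants.  One correction: you describe ``Milnor's more general setting'' as allowing self-transverse immersed annuli, but that is link \emph{homotopy}, the subject of \cite{Milnor}.  The isotopy in the present theorem is a homotopy through topological embeddings, as the paper stresses immediately after the statement; the trace $\Sigma$ is therefore honestly embedded, and the real subtlety is that $\Sigma$ need not be smooth or locally flat (pulling a local knot tight is permitted).  Consequently your Mayer--Vietoris step cannot literally invoke a tubular neighborhood $N(\Sigma)$.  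One can sidestep this by computing $H_*(W)$ directly via Alexander duality for the compact pair $\Sigma\subset I^3\times[0,1]$, after which the $H_1$-isomorphism and $H_2$-surjectivity for the end inclusions follow as you outline, and Stallings' theorem finishes the job.
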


The nontriviality of this stems from the fact that \emph{isotopy} here is not the ambient isotopy usually used as an equivalence relation on knots and links, but a homotopy through topological embeddings.  In the course of an isotopy in this sense, a knotted portion of a component may be unknotted by pulling it taut, changing the topology of the link complement.

Then Milnor gave an explicit presentation for this isotopy invariant of a link. Let $F_m$ be a free group on the variables $\{y_1, y_2 \ldots y_m\}$.  

\begin{thm}[{\cite[Theorem 4]{Milnor}}] \label{MilnorThm4}
  Suppose $L$ is an $m$-component link in $I^3$. For all $q \ge 1$, there is an isomorphism
  \[Q_q\pi_1(I^3 - L) \cong \frac{F_m}{\langle  \Gamma_q F_m, [y_i, w_i]:  1 \le i \le m \rangle},\]
  where the meridian of the $i^{th}$ loop gets mapped to $y_i$, and $w_i$ are words in $\{y_1, \ldots, y_m\}$ representing the conjugacy class of the $i^{th}$ longitude of $L$.
\end{thm}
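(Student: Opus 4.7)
My plan is to realize both sides of the claimed isomorphism as the $q$-th nilpotent quotient of the fundamental group of a space, and then apply Stallings' theorem \cite{Stall} on nilpotent quotients. Let $Y$ be the 2-complex built from $\bigvee_{i=1}^{m} S^{1}$ (the $i$-th circle labelled $y_i$) by attaching 2-cells $c_1,\ldots,c_m$ along the commutator loops $[y_i,w_i]$. By van Kampen, $\pi_1(Y)\cong F_m/\langle [y_i,w_i]:1\le i\le m\rangle$, so the right-hand side of the theorem is exactly $\pi_1(Y)/\Gamma_q\pi_1(Y)$. I will construct a map $f\colon Y\to I^3-L$ and show that it induces an isomorphism on $H_1$ and a surjection on $H_2$ of the fundamental groups; Stallings' theorem then upgrades this to an isomorphism on every nilpotent quotient.

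\textbf{Constructing the map.} Pick a basepoint in $I^3-L$ and send the $i$-th loop of the 1-skeleton of $Y$ to a chosen meridian $m_i$. With coherent choices of conjugating paths in the Wirtinger presentation, I can arrange that the word $w_i$ evaluated on these meridians is the actual longitude $\ell_i$ based at the same basepoint. Since $m_i$ and $\ell_i$ both lie on the boundary torus $T_i$ of a tubular neighborhood of the $i$-th component, the commutator loop $[m_i,\ell_i]$ bounds the fundamental 2-cell of the standard CW decomposition of $T_i$; I extend $f$ over each $c_i$ by sending it onto this disk, so that the image of $c_i$ is precisely the torus $T_i$.

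\textbf{Applying Stallings.} A cellular computation gives $H_1(Y)=H_2(Y)=\ZZ^m$, the cellular boundary $\partial_2$ vanishing because each attaching word is a commutator. On the target, $H_1(I^3-L)=\ZZ^m$ is generated by the meridians, and the long exact sequence of the pair $(I^3,I^3-L)$ together with excision and the Thom isomorphism gives $H_2(I^3-L)=\ZZ^m$ with basis the boundary tori $[T_1],\ldots,[T_m]$. By construction $f_*[c_i]=[T_i]$, so $f_*$ is an isomorphism on both $H_1$ and $H_2$ at the space level. The Hurewicz-type surjections $H_2(X)\twoheadrightarrow H_2(\pi_1(X))$ (Hopf's formula) fit into a commutative square with $f_*$, so the induced map $H_2(\pi_1(Y))\to H_2(\pi_1(I^3-L))$ is also surjective. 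Stallings' theorem then yields $\pi_1(Y)/\Gamma_q\pi_1(Y)\cong Q_q\pi_1(I^3-L)$, which is the desired isomorphism.

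\textbf{Main obstacle.} The main technical obstacle is the combinatorial bookkeeping needed to realize $w_i$ as the actual longitude of $m_i$ (rather than only a conjugate of it) and to make the 2-cell $c_i$ map to the specific bounding disk in $T_i$ that represents $[T_i]$ in $H_2$. With a careless setup the relations $[y_i,w_i]=1$ hold only in the nilpotent quotient rather than in $\pi_1(I^3-L)$ on the nose, and the clean space-level argument above no longer applies; getting the Wirtinger data and the geometric picture to match up is the heart of the proof.
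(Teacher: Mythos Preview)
The paper does not prove this theorem; it is quoted as \cite[Theorem 4]{Milnor} and used as a black box.  So there is no in-paper proof to compare against, and I will simply evaluate your argument on its own terms.

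Your Stallings-theorem strategy is a recognized modern route to results of this kind, but the argument has a genuine gap precisely at the point you flag as the ``main obstacle.''  To build the map $f\colon Y\to I^3-L$ you need $[m_i,\,w_i(m_1,\dots,m_m)]=1$ in $\pi_1(I^3-L)$, and you propose to secure this by arranging $w_i(m_1,\dots,m_m)=\ell_i$ exactly.  That is not possible in general: the $m$ meridians $m_1,\dots,m_m$ (one per component) only \emph{normally} generate $\pi_1(I^3-L)$; they generate each nilpotent quotient $Q_q\pi_1(I^3-L)$, but not $\pi_1(I^3-L)$ itself.  Already for a one-component trefoil the longitude is not a power of the chosen meridian, so no word in $m_1$ alone can hit it.  ``Coherent choices of conjugating paths in the Wirtinger presentation'' cannot repair this, because the conjugating paths themselves are not words in $m_1,\dots,m_m$.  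Hence the space-level map $Y\to I^3-L$, and with it your clean $H_2$-isomorphism picture, does not exist as described.

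What one actually has is a choice of $w_i$ with $w_i(m_1,\dots,m_m)\equiv\ell_i\pmod{\Gamma_q}$; this is exactly how Milnor produces the $w_i$, and it only yields $[m_i,\,w_i(m_1,\dots,m_m)]\in\Gamma_{q+1}\pi_1(I^3-L)$, giving a homomorphism $\pi_1(Y)\to Q_{q+1}\pi_1(I^3-L)$ rather than into $\pi_1(I^3-L)$.  From here one can still finish, but not by a single application of Stallings to a map of spaces: one either runs Milnor's original induction on $q$ (comparing the successive graded pieces $\Gamma_q/\Gamma_{q+1}$ via the five lemma), or one builds a larger model complex on the full set of Wirtinger generators---where the longitudes \emph{are} honest words in the generators---together with the Wirtinger $2$-cells, and then checks the $H_1$/$H_2$ conditions for that complex.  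Either fix is more work than your sketch allows for.
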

Here the \emph{meridian} of a link component is a small loop that links with it, and the \emph{longitude} is a parallel loop whose linking number with the component is zero.

A Milnor invariant of order $q$ can be thought of as a measurement of how much
$Q_q \pi_1(I^3 - L)$ deviates from $Q_q F_m$. Let $\mathbb{Z}\{x_1, x_2, \ldots, x_m\}$ be the ring of formal power series over the integers generated by $m$ non-commuting variables. There is an injective homomorphism called the Magnus expansion,
$$M: F_m \to \mathbb{Z}\{x_1, x_2, \ldots, x_m\}$$
given by $M(y_i) = 1+x_i$ and $M(y_i^{-1}) = (1-x_i+x_i^2-x_i^3 + \cdots)$. For a word $w \in F_m$ and a multi-index $I = (i_1, i_2, \ldots, i_q)$ let $\mu_w(I)$ denote
coefficient of $x_{i_1}x_{i_2}\cdots x_{i_q}$ in the power series $M(w)$. In \cite{Magnus}, Magnus showed the following lemma.

\begin{lem}[{\cite{Magnus}}] Let $w$ be a word in $F_m$. Then
 $w \in \Gamma_{q+1}F_m$ if and only if $\mu_w(I) = 0$ for all multi-indices $I$ of size at most $q$.
\end{lem}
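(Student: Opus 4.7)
The plan is to rephrase the lemma in terms of the augmentation ideal filtration on $R := \mathbb{Z}\{x_1,\ldots,x_m\}$. Let $\mathfrak{a}\subset R$ be the ideal of power series with zero constant term; then the condition $\mu_w(I)=0$ for all $1\leq|I|\leq q$ is equivalent to $M(w)-1\in\mathfrak{a}^{q+1}$, so the lemma amounts to the identity $\Gamma_{q+1}F_m = M^{-1}(1+\mathfrak{a}^{q+1})$, which I would prove by containing each side in the other.

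For the containment $\Gamma_{q+1}F_m \subseteq M^{-1}(1+\mathfrak{a}^{q+1})$, I would induct on $q$ using the algebraic identity
\[M([a,b]) - 1 = \bigl(M(a)M(b) - M(b)M(a)\bigr) M(a)^{-1} M(b)^{-1}.\]
If $M(a)-1\in\mathfrak{a}^p$ and $M(b)-1\in\mathfrak{a}^s$, expanding gives $M(a)M(b)-M(b)M(a)\in\mathfrak{a}^{p+s}$, so $M([a,b])-1\in\mathfrak{a}^{p+s}$. Since $\Gamma_{q+1}F_m$ is generated by commutators $[a,b]$ with $a\in F_m$ and $b\in\Gamma_qF_m$, and $1+\mathfrak{a}^{q+1}$ is closed under multiplication, the induction closes.

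The reverse containment is the harder direction, and I would prove it by induction on $q$ as well. Assuming $M(w)-1\in\mathfrak{a}^{q+1}$, we have $M(w)-1\in\mathfrak{a}^q$ a fortiori, so $w\in\Gamma_qF_m$ by the inductive hypothesis. The forward direction then makes the map
\[\overline M_q : \Gamma_qF_m/\Gamma_{q+1}F_m \longrightarrow \mathfrak{a}^q/\mathfrak{a}^{q+1}, \qquad [w]\mapsto [M(w)-1]\]
well-defined (its additivity also follows from the commutator identity, since the ``error'' $A_1A_2$ in expanding $(1+A_1)(1+A_2)$ lies in $\mathfrak{a}^{2q}\subseteq\mathfrak{a}^{q+1}$), and the image of $[w]$ is zero by hypothesis, so it suffices to show that $\overline M_q$ is injective.

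This injectivity is the main obstacle, and it is essentially the content of the Magnus--Witt theorem. The same commutator identity used above shows that $\overline M := \bigoplus_q \overline M_q$ is a homomorphism of graded Lie algebras from $\bigoplus_q\Gamma_qF_m/\Gamma_{q+1}F_m$ (with bracket induced by the group commutator) to the associated graded $\bigoplus_q\mathfrak{a}^q/\mathfrak{a}^{q+1}$, which is naturally the tensor algebra $\mathbb{Z}\langle x_1,\ldots,x_m\rangle$ under the bracket $[u,v]=uv-vu$. Since $\overline M_1$ sends each $y_i$ to $x_i$, the universal property of the free Lie algebra factors $\overline M$ through the free Lie algebra $\mathfrak{L}$ on $x_1,\ldots,x_m$; the embedding $\mathfrak{L}\hookrightarrow\mathbb{Z}\langle x_1,\ldots,x_m\rangle$ (a consequence of the Poincaré--Birkhoff--Witt theorem, equivalently Friedrichs' theorem) combined with the fact that $\bigoplus_q\Gamma_qF_m/\Gamma_{q+1}F_m$ is generated by the $y_i$ as a Lie algebra would then yield injectivity of each $\overline M_q$, completing the proof.
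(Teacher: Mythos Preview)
The paper does not supply its own proof of this lemma; it merely states the result and attributes it to Magnus \cite{Magnus}. There is therefore nothing in the paper to compare your argument against.

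Your outline is the standard Magnus--Witt argument and is essentially correct. One phrasing is slightly off: the universal property of the free Lie algebra does not ``factor $\overline M$ through $\mathfrak L$'' in the direction you suggest. Rather, it produces a Lie algebra map $\phi:\mathfrak L\to\bigoplus_q\Gamma_qF_m/\Gamma_{q+1}F_m$ sending $x_i\mapsto[y_i]$; this $\phi$ is surjective because the target is generated as a Lie algebra by the $[y_i]$, and the composite $\overline M\circ\phi:\mathfrak L\to\mathbb Z\langle x_1,\ldots,x_m\rangle$ agrees with the canonical embedding on generators, hence everywhere. Injectivity of that embedding (PBW/Friedrichs) then forces $\phi$ to be injective, so $\phi$ is an isomorphism and $\overline M=(\overline M\circ\phi)\circ\phi^{-1}$ is injective. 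With that adjustment your proof goes through.
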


For a multi-index $I = (i_1, i_2, \ldots, i_q)$, Milnor defines $\mu(I) = \mu_{w_{i_q}}(i_1, i_2, \ldots, i_{q-1})$. Then he lets $\Delta(I)$ be the greatest common divisor of the set $\{\mu(I')\}_{I'}$ where $I'$ ranges over any multi-index obtained by removing one or more entries from $I$ and cyclically permuting the rest of the entries. The Milnor invariant $\overline \mu(I)$ is defined to be $\mu(I)$ modulo $\Delta(I)$.  We call $q$ the \emph{order} of the Milnor invariant; note that if all Milnor invariants of lower order are trivial, then $\overline\mu(I)$ lies in $\ZZ$.  Milnor showed that the $\overline \mu(I)$ are well-defined and, moreoever, are isotopy invariants.

\begin{thm}[{\cite[Theorem 5]{Milnor2}}] Suppose $L$ is an $m$-component link. Then $\overline \mu(I)$ is an isotopy invariant of $L$, for all multi-indices $I$ of size at most $m$. In particlar, if $L$ is isotopic to the trivial $m$-component link, then
$\overline \mu(I) = 0$ for all multi-indices $I$ of size at most $m$. 
\end{thm}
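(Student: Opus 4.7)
The plan is to combine Theorem~\ref{MilnorThm4} with Magnus's lemma to show that $\mu(I) \pmod{\Delta(I)}$ depends only on the isomorphism class of $Q_q\pi_1(I^3-L)$ together with the distinguished meridian/longitude data. Isotopy invariance will then follow immediately because the previously quoted Milnor theorem says that this pair is an isotopy invariant of the link. The unlink case will be a one-line corollary.

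First I would carefully inventory the sources of ambiguity in the words $w_i \in F_m$ produced by Theorem~\ref{MilnorThm4}. Given any two presentations of $Q_q\pi_1(I^3-L)$ of the indicated form, and any two choices of longitudes representing the same conjugacy class, the $w_i$ are determined only up to the following modifications:
\begin{enumerate}[(a)]
\item multiplication by an arbitrary element of $\Gamma_q F_m$;
\item cyclic conjugation in $F_m$, since each longitude is only defined up to free homotopy;
\item multiplication by a product of conjugates of the relators $[y_j, w_j]$.
\end{enumerate}
For each of (a)--(c) I would compute the induced change in $\mu_{w_i}(i_1,\ldots,i_{q-1})$, the coefficient of $x_{i_1}\cdots x_{i_{q-1}}$ in the Magnus expansion. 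Modification (a) produces no change at all, directly by the cited Magnus lemma, since that coefficient sees only monomials of degree at most $q-1$. Modification (b) has the effect of cyclically permuting the tuple, which is why the definition of $\Delta(I)$ includes all cyclic permutations in the gcd. Modification (c), expanded using $M(uv) = M(u)M(v)$ and $M([y_j,w_j]) = 1 + (x_j M(w_j) - M(w_j) x_j) + (\text{higher order})$, changes $\mu_{w_i}(I)$ by an integer linear combination of coefficients of the form $\mu_{w_{i'}}(J)$ where $J$ is a subtuple of $I$ with one or more entries removed and the rest cyclically permuted. Every such expression lies in the ideal generated by $\Delta(I)$, so $\overline\mu(I)$ is well-defined.

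Isotopy invariance follows: by the cited theorem, isotopic links give rise to the same group $Q_q\pi_1(I^3-L)$ with the same meridian/longitude data up to the automorphisms described by (a)--(c), hence the same value of $\overline\mu(I)$. The unlink case is immediate once we choose the standard diagram: every longitude is nullhomotopic in $I^3 - L$, so we may take $w_i = 1 \in F_m$, giving $M(w_i) = 1$ and hence $\mu(I) = 0$ for every $I$ of length $\ge 1$; this certainly reduces to $0$ modulo $\Delta(I)$.

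The main technical obstacle is the bookkeeping in (c): showing that the correction term introduced by conjugating and inserting a $[y_j, w_j]$ into $w_i$ really can be expanded as an integer combination of $\mu$-values indexed by proper cyclic subsequences of $I$. This is a double induction on $q$ and on the length of the word being modified, exploiting the fact that lower-order Milnor invariants are already defined and, by inductive hypothesis, are honest integers whose values show up as the coefficients that appear when one differentiates the Magnus expansion of the relator. Once this identity is established the rest of the argument is essentially formal.
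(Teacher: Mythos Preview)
The paper does not prove this theorem; it is quoted as background from \cite[Theorem~5]{Milnor2} and used as a black box. So there is no proof in the paper to compare your proposal against.

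That said, your outline is recognizably Milnor's original argument, and the overall strategy---show that the ambiguities (a), (b), (c) in the choice of the $w_i$ change $\mu(I)$ only by multiples of $\Delta(I)$---is the right one. One point is misattributed, though. Conjugating $w_i$ by an element $g\in F_m$ does not ``cyclically permute the tuple.'' Writing $M(gw_ig^{-1})=M(g)M(w_i)M(g)^{-1}$ and expanding, the change in the coefficient of $x_{i_1}\cdots x_{i_{q-1}}$ is an integer combination of \emph{lower-degree} coefficients of $M(w_i)$, i.e.\ of values $\mu(I')$ with $I'$ obtained from $I$ by deleting entries (no permutation involved). The cyclic permutations in the definition of $\Delta(I)$ actually enter through modification (c): inserting a conjugate of $[y_j,w_j]$ contributes, to leading order, $x_j(M(w_j)-1)-(M(w_j)-1)x_j$, so the correction to the coefficient of $x_{i_1}\cdots x_{i_{q-1}}$ picks up coefficients of $M(w_j)$---that is, values $\mu(\ldots,j)$ with $j$ appended at the end---which is precisely a cyclic shift of a proper subsequence of $I$. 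This is why $\Delta(I)$ must run over cyclic permutations as well as deletions. With that correction, the bookkeeping you describe in (c), the induction on $q$, and the unlink computation are all fine.
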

From the cited results, we can conclude the following:
\begin{thm} \label{MilnorSummary}
    Let $L$ be an $m$-component link, and suppose all Milnor invariants of $L$ of order $<q$ vanish.  Then $Q_q\pi_1(I^3 - L) \cong Q_qF_m$ via the map sending meridians to generators, and each $w_i \in \Gamma_{q-1}\pi_1(I^3 - L)$, where $w_i$ are elements corresponding in the conjugacy classes of the longitudes.
\end{thm}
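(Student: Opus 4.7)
My plan is to reduce the theorem to a single inductive claim: assuming every Milnor invariant of order less than $q$ vanishes, we actually have $\mu(I) = 0$ as an integer (not merely modulo $\Delta(I)$) for every multi-index $I$ with $|I| \le q-1$. I would prove this by induction on $|I|$. In the base case $|I| = 2$ there are no nontrivial sub-indices to take gcds over, so $\Delta(I) = 0$ and the hypothesis $\overline\mu(I) = 0$ gives $\mu(I) = 0$ directly. For the inductive step with $2 < |I| = k \le q-1$, every multi-index $I'$ obtained by removing one or more entries from $I$ and cyclically permuting has size strictly less than $k$; by induction $\mu(I') = 0$ for each such $I'$, so $\Delta(I) = \gcd\{\mu(I')\} = 0$, and the hypothesis $\overline\mu(I) = 0$ again upgrades to $\mu(I) = 0$.

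With this in hand, Magnus's lemma applies immediately: for each $j$, we have $\mu_{w_j}(J) = \mu((J,j)) = 0$ whenever $|J| \le q-2$, so $w_j \in \Gamma_{q-1} F_m$. Plugging this into Theorem \ref{MilnorThm4}, each relation $[y_i, w_i]$ lies in $[F_m, \Gamma_{q-1} F_m] \subseteq \Gamma_q F_m$ and is therefore already implied by the relations generating $\Gamma_q F_m$. The presentation collapses to
\[Q_q\pi_1(I^3 - L) \cong F_m / \Gamma_q F_m = Q_q F_m,\]
with meridians corresponding to the free generators, which is the first assertion.

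For the longitude statement, let $\phi: \pi_1(I^3 - L) \twoheadrightarrow Q_q\pi_1(I^3 - L) \cong Q_q F_m$ be the natural quotient, whose kernel is $\Gamma_q\pi_1(I^3 - L)$. Each longitude $\ell_i$ is sent by $\phi$ to the class of $w_i$, which lies in $\Gamma_{q-1} F_m / \Gamma_q F_m$. Since surjective homomorphisms preserve the lower central series, $\Gamma_{q-1} F_m / \Gamma_q F_m = \Gamma_{q-1}(Q_q F_m) = \phi(\Gamma_{q-1}\pi_1(I^3 - L))$. Hence $\ell_i = h \cdot k$ for some $h \in \Gamma_{q-1}\pi_1(I^3 - L)$ and $k \in \ker\phi = \Gamma_q\pi_1(I^3 - L) \subseteq \Gamma_{q-1}\pi_1(I^3 - L)$, so $\ell_i \in \Gamma_{q-1}\pi_1(I^3 - L)$ as claimed. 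The main subtlety is the inductive bootstrap in the first paragraph, which converts Milnor's reduced mod-$\Delta(I)$ invariants into genuine integer equalities before Magnus's lemma and Milnor's presentation can be applied mechanically.
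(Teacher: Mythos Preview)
Your proof is correct and follows essentially the same approach as the paper's: both bootstrap $\overline\mu(I)=0$ to the integer equality $\mu(I)=0$, feed this into Magnus' lemma to get $w_i\in\Gamma_{q-1}F_m$, and then observe that the relations $[y_i,w_i]$ in Milnor's presentation (Theorem~\ref{MilnorThm4}) become redundant modulo $\Gamma_qF_m$. The only difference is organizational---the paper packages the bootstrap into a single induction on $q$, whereas you isolate it as a separate induction on $|I|$ and are more explicit about transferring the conclusion from $F_m$ back to $\pi_1(I^3-L)$.
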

\begin{proof}
    We see this by induction on $q$.  When $q=1$ there is nothing to check since all the groups are trivial.  Now suppose we have the result for $q$ and we know that $\overline \mu(I)=0$ for $I$ of length at most $q$.  Since we know that $Q_q\pi_1(I^3 - L) \cong Q_qF_m$, by Magnus' lemma it follows that each $w_i \in \Gamma_q\pi_1(I^3 - L)$.  Then for any $y_i$, $[y_i,w_i] \in \Gamma_{q+1}\pi_1(I^3 - L)$, and so by Theorem \ref{MilnorThm4}, $Q_{q+1}\pi_1(I^3 - L) \cong Q_{q+1}F_m$.  This gives the result for $q+1$.
\end{proof}

\subsection{Milnor groups and link homotopy} \label{S:link-htpy}
A \emph{link homotopy} is a deformation in which a strand of the link may go through itself but not through other strands.  More formally, a homotopy between two $n$-strand links $L_0$ and $L_1$ is a set of maps $\gamma^i_t:S^1 \times [0,1] \to I^3$, $i=1,\ldots,n$ such that $\{\gamma^i_0\}$ and $\{\gamma^i_1\}$ are embeddings with image $L_0$ and $L_1$, respectively, and for every $i \neq j$ and every $t \in [0,1]$, $\gamma^i_t$ and $\gamma^j_t$ have disjoint images.  Milnor studied invariants of link homotopy in \cite{Milnor}; see \cite{Kru} for another exposition of the main results.

We say that a link is \emph{homotopically trivial} if it is link homotopic to the unlink, and \emph{almost homotopically trivial} if all of its proper sublinks are link homotopic to the unlink.

Milnor showed that an invariant of link homotopy is the \emph{link group} or \emph{Milnor group}.  Let $L \subset I^3$ be an $n$-component link, and denote the meridian of the $i$th component by $m_i$.  Then its Milnor group $G(L)$ is
\[\pi_1(I^3 - L)/\langle\langle [m_i,gm_ig^{-1}] : i=1,\ldots,n,\; g \in \pi_1(I^3 - L) \rangle\rangle.\]
In particular, the Milnor group of the $n$-component unlink is the \emph{free Milnor group}, which we follow Freedman in denoting $FM_n$.

The group $FM_n$ is nilpotent of step $n$ and torsion-free; it can be thought of as a lattice in a nilpotent Lie group whose corresponding Lie algebra is spanned by iterated brackets of distinct generators, e.g.\ a basis for the Lie algebra of $FM_3$ is given by
\[X,\; Y,\; Z,\; [X,Y],\; [Y, Z],\; [Z, X],\; [X, [Y, Z]],\; [Z, [X, Y]],\]
where $X$, $Y$, and $Z$ are vectors in the direction of the generators of the group.  To demonstrate this identification, Milnor showed that $FM_n$ admits an injective Magnus expansion homomorphism
\[M:FM_n \hookrightarrow \mathcal R(x_1,\ldots,x_n)^\times\]
sending generators $m_i \mapsto 1+x_i$.  Here $\mathcal R(x_1,\ldots,x_n)^\times$ is the group of units of the ring $\mathcal R(x_1,\ldots,x_n)$ of polynomials with integer coefficients in non-commuting variables $x_1,\ldots,x_n$ modulo the ideal generated by monomials in which any variable occurs more than once.
\begin{thm}[{Milnor \cite[Theorem 8]{Milnor}}]
    An $n$-component link is homotopically trivial if and only if its Milnor group is $FM_n$.
\end{thm}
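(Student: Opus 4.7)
The plan is to prove both directions separately. For the forward direction, that $L$ link-homotopic to the unlink implies $G(L) \cong FM_n$, I would show that the Milnor group is a link-homotopy invariant; since the unlink visibly has Milnor group $FM_n$, the claim follows. A generic link homotopy decomposes into ambient isotopies, which preserve $\pi_1(I^3 - L)$ and hence $G(L)$, together with self-crossings of a single component $L_i$. At a self-crossing, a Wirtinger relation $a = bcb^{-1}$ is replaced by $a = b^{-1}cb$, where $a$, $b$, $c$ are arcs of $L_i$ and thus conjugates of $m_i$. The defining Milnor relations $[m_i, g m_i g^{-1}] = 1$ force all conjugates of $m_i$ to commute, so both versions collapse to $a = c$ and $G(L)$ is unchanged.

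For the backward direction, I would induct on $n$. The base case $n = 1$ is immediate: any knot is link-homotopic to an unknot since $I^3$ is contractible. For the inductive step, assume $G(L) \cong FM_n$ via meridians, and consider the sublink $L' = L - L_n$. Capping off $L_n$ kills the normal closure $\langle\langle m_n \rangle\rangle$, yielding
\[G(L') \;\cong\; G(L)/\langle\langle m_n \rangle\rangle \;\cong\; FM_n/\langle\langle m_n \rangle\rangle \;\cong\; FM_{n-1}.\]
By the inductive hypothesis, $L'$ is link-homotopic to the standard unlink, so without loss of generality $L'$ is this unlink. What remains is to link-homotope $L_n$ into a small ball disjoint from $L'$.

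To handle the final step, observe that the longitude $\ell_n$ has linking number zero with $L_n$, so its image in $\pi_1(I^3 - L_n) \cong \ZZ$ vanishes, whence $\ell_n$ lies in $\langle\langle m_1, \ldots, m_{n-1} \rangle\rangle \subset G(L) \cong FM_n$. The \emph{main obstacle} is then to convert this algebraic information into a geometric spanning disk for $L_n$ in $I^3 - L'$. Milnor's key geometric lemma accomplishes this: using the hypothesis $G(L) = FM_n$ together with the Magnus expansion $M:FM_n \hookrightarrow \mathcal R(x_1,\ldots,x_n)^\times$ to track $\ell_n$ across the lower central series, one shows that $L_n$ bounds a singular disk in $I^3 - L'$ whose double points involve only $L_n$ itself; finger-move self-homotopies of $L_n$ then resolve these double points into an embedded spanning disk. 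Once $L_n$ bounds an embedded disk disjoint from $L'$, it is link-homotopic to a small unknot in a disjoint ball, completing the induction.
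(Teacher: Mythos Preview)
The paper does not prove this statement; it is quoted with attribution to \cite[Theorem~8]{Milnor} and used as a black box in the discussion of Milnor groups. There is therefore no proof in the paper to compare your proposal against.

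As an independent assessment: your forward direction is essentially Milnor's own argument and is correct. Your inductive framework for the converse is also Milnor's, but the final step---passing from $G(L)\cong FM_n$ with $L'$ already the unlink to ``$L_n$ is link-homotopically trivial in $I^3\setminus L'$''---is where the real content lies, and you have not actually argued it. Invoking an unnamed ``key geometric lemma'' and then describing finger moves that ``resolve these double points into an embedded spanning disk'' misdescribes the mechanism: an \emph{embedded} disk is neither needed nor obtained (that would be an isotopy, and in dimension three finger moves do not by themselves turn singular disks into embedded ones). What Milnor actually does is twofold: first, an algebraic step showing that the hypothesis forces the class of $L_n$ in $G(L')=FM_{n-1}$ to be trivial; second, a separate classification theorem to the effect that, once $L'$ is the unlink, this conjugacy class in $G(L')$ is a \emph{complete} link-homotopy invariant of $L$. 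Your sketch conflates these two ingredients and does not supply either.
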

In particular, if an $n$-component $L=L_1 \cup \cdots \cup L_n$ is almost homotopically trivial, then the Milnor group $G(L')$ of $L'=L_2 \cup \cdots \cup L_n$ is $FM_{n-1}$.  Consider the element $\ell=[L_1] \in G(L')$; this is well-defined up to conjugation.  If $\ell=1$, then $L$ is the unlink.

Conversely, if $\ell$ has a nontrivial projection onto the quotient group $G_i \cong FM_{n-2}$ induced by forgetting about some strand $L_i$, $i \neq 1$, then $L - L_i$ is a homotopically nontrivial proper sublink of $L$.  Thus if $L$ is almost homotopically trivial, then $\ell$ must be in the kernel of all of these projections $p_i:FM_{n-1} \to FM_{n-2}$.  The intersection of these kernels is central in $G(L')$.  This can be most easily seen by using the Magnus expansion.  Applying the Magnus homomorphism to the domain and codomain of the map
\[FM_{n-1} \xrightarrow{\prod p_i} \prod_{i=1}^n FM_{n-2},\]
we get a restriction of the homomorphism
\[\mathcal R(x_1,\ldots,x_n) \to \prod_{i=1}^n \mathcal R(x_1,\ldots,\widehat{x_i},\ldots,x_n),\]
where the $i$th coordinate map sends the variable $x_i \mapsto 0$.  The kernel of this homomorphism consists exactly of polynomials of the form $1+a$, where $a$ is homogeneous of degree $n$.  Such polynomials are central in $\mathcal R(x_1,\ldots,x_n)^\times$, and so their preimages are central in $FM_{n-1}$.

Thus if $L$ is almost homotopically trivial, but homotopically nontrivial, then $\ell$ must be a nontrivial element of the center $Z(G(L'))$.  (A slightly more involved argument would show that this is the same as $\Gamma_{n-1}G(L')$.)  This gives us a complete, free abelian group--valued homotopy invariant of almost homotopically trivial links.  We will use this in the next section to bound the number of copies of such a link that may be $\epsi$-diagonally embedded.

\section{Sharp upper bound for homotopically nontrivial links}
In \cite[Theorem 6]{Free}, Freedman gives a non-optimal upper bound for $\epsi$-diagonally packing homotopically nontrivial links.  In this section, we improve this by proving that the lower bound from Theorem \ref{lower-bound} is sharp for these links.

\begin{refthm}{\ref{sharp-upper-bound}}
  Suppose $L$ is a homotopically nontrivial link with $m$ components. Let $nL$ be the link consisting of $n$ copies of $L$ contained in disjoint balls. Then for every $\epsi>0$ and every diagonal $\epsi$-embedding of $nL$, $n=\exp(O(\epsi^{-3}))$.
\end{refthm}

\begin{proof}
Suppose first that $L$ has a proper sublink $L'$ which is homotopically nontrivial.  Then the desired result follows from the same result for $nL'$.  Therefore we may assume that $L$ is almost homotopically trivial.

We begin as before: tile $I^3$ by cubes of sidelength $\approx \epsi/2$, slightly perturbed if needed so that the tiling is transverse to $L$.  Since the cubes have diameter $<\epsi$, every tile has nontrivial intersection with at most one component of every copy of $L$.  We will show that for each collection of tiles $U$, there are $O(\epsi^{-3(m-1)})$ copies of $L$ so that the first component of each copy is contained in $U$ and the other components in $I^3 - U$. The theorem will follow since there are $\exp(O(\epsi^{-3}))$ ways to choose such a $U$.

Now consider any fixed $U$ which is a union of some of our tiles. Denote the first component of the $i^{th}$ copy of $L$ by $L_i^1$ and denote the union of all the other components of the $i^{th}$ of $L$ by $L_i'$. Let $I \subset \{1,2,\ldots,n\}$ be all the indices such that $L^1_i$ is contained in $U$. So for each $i \in I$ we have $L'_i \subset I^3-U$, since each cube intersects at most one component from each copy of $L$. We will bound $|I|$ by showing that the elements $[L^1_i] \in \pi_1(U)$ induce a large rank subgroup in a certain nilpotent group and bounding the rank of this group.

Let $FM_{m-1}$ be the free Milnor group on $(m-1)$ variables, and consider the homomorphisms
\[\rho_i: \pi_1(I^3 - L_i') \to FM_{m-1}\]
sending meridians to generators, which are well-defined and surjective by Theorem \ref{MilnorSummary}.  Observe that since the copies of $L$ lie in disjoint balls, for any $k \in I$ and $k \neq i$, $[L^1_k]$ is 0 in $\pi_1(I^3 - L'_i)$, and so $\rho_i([L^1_k])=0$. Next, define $G$ to be the subgroup of $\pi_1(U)$ generated by the images of all the $L_i^1$ for $i \in I$.  The inclusion $U \subset I^3 - L_i'$ induces a map $\nu_i:\pi_1(U) \to \pi_1(I^3 - L_i')$. By the previous observation the image of $G$ under the composition $\rho_i \circ \nu_i(G)$ is a cyclic group in $FM_{m-1}$ generated by $\rho_i[L^1_i]$. Since $FM_{m-1}$ is torsion-free, this group must be $\ZZ$. We can put together all these homomorphisms to get a map,
\[F: \pi_1(U) \to \bigoplus_{i \in I} FM_{m-1}\]
where the $i^{th}$ coordinate is defined by $\rho_i \circ \nu_i$. Note that $F(G) \cong \ZZ^{|I|}$, since $F(L^1_i)$ goes to an element with a nonzero $i^{th}$ coordinate and all other coordinates $0$. To bound $|I|$ in terms of $\epsi$, notice that $\pi_1(U)$ is composed of $O(\epsi^{-3})$ simply connected tiles and so $\pi_1(U)$ is generated by $O(\epsi^{-3})$ elements. Let $N = F(\pi_1(U))$, which is an $(m-1)$-step nilpotent group also generated by $O(\epsi^{-3})$ elements. For $q \ge 1$, $\Gamma_q N$ is generated by $O(\epsi^{-3q})$ elements (the exact number was computed by Witt \cite[p.~201]{Witt}).

Let $G_q=F(G) \cap \Gamma_q N$.  As $\Gamma_q FM_{m-1} / \Gamma_{q+1} FM_{m-1}$ is free abelian, so is $G_q/G_{q+1}$, so there is a splitting
\[G_q \cong G_q/G_{q+1} \oplus G_{q+1}\]
and by induction $F(G) \cong \bigoplus_{q=1}^{m-1} G_q/G_{q+1}$.  On the other hand,
\[\rank (G_q/G_{q+1}) \leq \rank (\Gamma_q N/\Gamma_{q+1} N) = O(\epsi^{-3q}).\]
Therefore
\[|I|=\rank F(G)=\sum_{q=1}^{m-1} \rank (G_q/G_{q+1})=O(\epsi^{-3(m-1)}). \qedhere\]
\end{proof}

\section{Weak upper bound for a larger class of links}
\begin{refthm}{\ref{weak-upper-bound}}
  Suppose that $L$ is an $m$-component link whose Milnor invariants do not all vanish.  Let $q$ be the order of the first nontrivial Milnor invariant of $L$.  Then for every diagonal $\epsi$-embedding of $nL$, $n=\exp(O(\epsi^{-6(q-1)}))$.
\end{refthm}
For example, this gives a bound $n=\exp(O(\epsi^{-18}))$ when $L$ is the Whitehead link, which has a nontrivial Milnor invariant of type $(1,1,2,2)$.
\begin{proof}
Suppose we have a link $L$ with a first nontrivial Milnor invariant of order $q$, perhaps with repeating terms.  Without loss of generality, we can assume that this invariant depends nontrivially on the first component of the link.  Suppose we have a diagonal $\epsi$-embedding of $nL$.  As before, fix a tiling of $I^3$ by cubes of side length $\approx \epsi/2$, perhaps slightly perturbed so that they are transverse to the embedded link.  Let $L_i^j$ be the $j$th component of the $i$th copy of the link $L$.

Let $U \subset [0,1]^3$ be the union of some cells of our tiling, and let $I \subseteq \{1,\ldots,n\}$ be the set of indices for which the union of cells intersecting $L_i^1$ is $U$.  It follows that for $i \in I$, $L_i^j \cap U=\emptyset$ for $j \neq 1$.  As in the proof of \cite[Theorem 6]{Free}, we will bound $|I|$ by a pigeonhole argument: we will define a finite invariant associated to a loop in $U$ such that for every possible value of this invariant, there is at most one $i \in I$ such that the invariant takes this value on $L_i^1$.  Since there are $\exp(O(\epsi^{-3}))$ possible values of $U$, this will imply a bound on $n$.

We will assume in the rest of the proof that $I^3 - U^\circ$ is connected.  If it's not, then we can fix this by drilling $O(\epsi^{-3})$ thin tubes through $U$ that miss all the curves $L_i^1$.  This operation preserves all the necessary properties of $U$: its fundamental group as well as that of its complement is generated by $O(\epsi^{-3})$ generators, its interior contains each $L_i^1$, and its complement contains each $L_i^j$ for $j \neq 1$.

Write $L_i'=L_i^2 \cup \cdots \cup L_i^m$.  We know that for each $i$, the $L_i$ are isotopic, and the isomorphism induced by isotopy preserves the class in $Q_q\pi_1(I^3 - L_i)$ of the longitude $\ell_i$ of $L_i^1$, which we call $[\ell_i]$.  (This class is only well-defined up to conjugation, but we can pick a representative.)  Moreover, this class is nontrivial since $L_i$ has a nontrivial $q$th order Milnor invariant.  Since nilpotent groups are residually finite \cite{Hall}, we can choose an integer $p$ so that $[\ell_i]$ remains nontrivial in the group
\[Q_q^p\pi_1(I^3 - L_i)=Q_q\pi_1(I^3 - L_i)/\langle x^p : x \in Q_q\pi_1(I^3 - L_i)\rangle.\]

Conversely, for $i \neq j$, since the link $L_i^1 \cup L_j'$ is split, we have that
\[\pi_1(I^3 - L_i^1 - L_j') \cong \pi_1(I^3 - L_i^1) * \pi_1(I^3 - L_j'),\]
with the class of the longitude of $L_i^1$ contained in the left factor.  Therefore this class is trivial in $Q_q\pi_1(I^3 - L_i^1 - L_j') \cong \ZZ * Q_q\pi_1(I^3-L_j')$.

To summarize, let $\mathcal C$ be the set of embedded loops $C \subset U$ such that all Milnor invariants of order $<q$ vanish for $C \cup L_i'$ for all $i \in I$.  By Theorem \ref{MilnorSummary}, for every $i \in I$, there is an isomorphism
\[\alpha_i^C:Q_q^p\pi_1(I^3 - C - L_i') \xrightarrow{\cong} Q_q^pF_m,\]
which is well-defined up to conjugation, sending the meridians of the components to generators.  Pushing the longitude of $C$ forward along $\alpha_i^C$ for each $i$ gives a map
\[\alpha:\mathcal C \to \bigoplus_{i \in I} Q_q^pF_m,\]
which is injective on $\{L_i^1: i \in I\}$ by the discussion above.

Now suppose we find an invariant of a curve $C \in \mathcal C$, taking values in a finite set which depends only on $U$, which determines $\alpha(C)$.  Then $|I|$ is bounded by the number of possible values taken by this invariant.
\begin{lem}
  Such an invariant is given by: an element of the finite group $Q_q^p F_{r+1}$, a homomorphism $Q_q^p F_s \to Q_q^p F_{r+1}$, and a subgroup of $Q_q^pF_{r+1}$, where $r$ and $s$ are the numbers of generators of $H_1(U)$ and $H_1(\partial U)$, respectively.
\end{lem}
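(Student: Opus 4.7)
My plan is to apply Seifert--van Kampen across $\partial U$ to express $\pi_1(I^3 - C - L_i')$ as an amalgamated product, isolate the pieces that depend only on $C$ (as opposed to on $L_i'$), and encode them by a finite triple of the stated form.

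First I would set up the decomposition. Writing $V = I^3 - U^\circ$ and $X_i = I^3 - C - L_i'$, the fact that $C \subset U^\circ$ and $L_i' \cap U = \emptyset$ means that $X_i$ is the union of $U - C$ and $V - L_i'$ with intersection $\partial U$, so
\[ \pi_1(X_i) = \pi_1(U - C) *_{\pi_1(\partial U)} \pi_1(V - L_i'). \]
The element $\alpha_i^C([\ell_C])$ that I want to extract is the image of the longitude $[\ell_C] \in \pi_1(U - C)$ under the quotient $\pi_1(X_i) \twoheadrightarrow Q_q^p \pi_1(X_i) \cong Q_q^p F_m$. It is therefore determined by the $C$-dependent data consisting of $[\ell_C] \in Q_q^p \pi_1(U-C)$ and the inclusion-induced map $Q_q^p \pi_1(\partial U) \to Q_q^p \pi_1(U-C)$, together with the $C$-independent data of $Q_q^p \pi_1(V - L_i')$ and its gluing from $Q_q^p \pi_1(\partial U)$, the latter being determined once $L_i'$ and $U$ are fixed.

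Next I would finitize the $C$-dependent part. The key observation is that for a finitely generated nilpotent group, any lifts of generators of the abelianization generate the whole group. Fix once and for all $r$ loops in $U$ representing generators of $H_1(U)$ and $s$ loops in $\partial U$ representing generators of $H_1(\partial U)$. For a given $C$, adjoining a meridian $\mu_C$ produces generators of $H_1(U - C)$, which therefore generate $Q_q^p \pi_1(U - C)$, yielding a surjection $Q_q^p F_{r+1} \twoheadrightarrow Q_q^p \pi_1(U-C)$ with kernel $N$. Let $\ell \in Q_q^p F_{r+1}$ be a lift of $[\ell_C]$. The chosen $s$ loops give a surjection $Q_q^p F_s \twoheadrightarrow Q_q^p \pi_1(\partial U)$, and the composition with the gluing map into $Q_q^p \pi_1(U-C)$, lifted generator-by-generator through $Q_q^p F_{r+1} \twoheadrightarrow Q_q^p \pi_1(U-C)$, yields a homomorphism $\phi : Q_q^p F_s \to Q_q^p F_{r+1}$. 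The triple $(\ell, \phi, N)$ is the invariant, and the set of possible such triples is finite because $Q_q^p F_{r+1}$ itself is finite.

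The last step is to verify that $(\ell, \phi, N)$, together with knowledge of $L_i'$, determines $\alpha_i^C([\ell_C])$: one recovers $Q_q^p \pi_1(U-C) = Q_q^p F_{r+1}/N$, the class $[\ell_C] = \ell \bmod N$, and the gluing map $Q_q^p \pi_1(\partial U) \to Q_q^p \pi_1(U-C)$ from $\phi \bmod N$, and then combines these with the fixed $V$-side data through the amalgamation. The hardest point is this recovery, since the $Q_q^p$ functor does not preserve pushouts on the nose. I would handle it by working directly along the isomorphism $\alpha_i^C$, tracking the images in $Q_q^p F_m$ of the $r+1$ chosen generators of $\pi_1(U - C)$: the image of $\mu_C$ is the generator corresponding to $C$, while the image of each $\gamma_j \subset U$ is pinned down by combining the fixed linking and Milnor-type interactions of $\gamma_j$ with $L_i'$ with the $\partial U$-gluing information stored in $\phi$.
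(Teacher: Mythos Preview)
Your setup matches the paper almost exactly through the encoding step: the Seifert--van Kampen decomposition across $\partial U$, the observation that lifts of abelianization generators generate any nilpotent quotient (giving the surjection $Q_q^p F_{r+1} \twoheadrightarrow Q_q^p\pi_1(U-C)$), and the packaging of the $C$-dependent data as the triple $(\ell,\phi,N)$ are all precisely what the paper does.

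The gap is in your final paragraph.  You propose to recover $\alpha_i^C([\ell_C])$ by computing $\alpha_i^C(\gamma_j)$ for each fixed curve $\gamma_j \subset U$, asserting that this image is ``pinned down by combining the fixed linking and Milnor-type interactions of $\gamma_j$ with $L_i'$ with the $\partial U$-gluing information stored in $\phi$.''  That is not an argument: $\phi$ records how loops on $\partial U$ sit inside $U-C$, and it is not at all clear how this determines where an \emph{interior} curve $\gamma_j$ lands in $Q_q^p F_m$, particularly since the isomorphism $\alpha_i^C$ itself depends on $C$.  Any direct computation here would have to unwind the amalgamation and the passage to the nilpotent quotient simultaneously, which you have not done.

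The paper avoids this entirely by a functoriality argument.  If $C$ and $C'$ yield the same triple, one gets an isomorphism $\iota:Q_q^p\pi_1(U-C)\to Q_q^p\pi_1(U-C')$ intertwining the maps from $Q_q^p\pi_1(\partial U)$; then $\iota * \id$ is an isomorphism of the amalgamated products, and hence of their $Q_q^p$-quotients $Q_q^p\pi_1(I^3-C-L_i') \to Q_q^p\pi_1(I^3-C'-L_i')$.  The key point you are missing is that this last isomorphism sends meridians to meridians, and the meridians \emph{generate} $Q_q^p F_m$; therefore $\alpha_i^{C'}\circ(\iota*\id)=\alpha_i^C$ as maps to $Q_q^p F_m$, and evaluating on the longitude finishes the proof.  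No computation of $\alpha_i^C(\gamma_j)$ is ever needed.
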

\begin{proof}
  By the van Kampen theorem, the homomorphisms
  \[\xymatrix{
    \pi_1(\partial U) \ar[r]^-{\beta_i} \ar[d]^{\ph_C} & \pi_1(I^3 - U^\circ - L_i') \ar[d] \\
    \pi_1(U - C) \ar[r] & \pi_1(I^3 - C - L_i')
  }\]
  induced by the inclusion maps form a pushout diagram, that is,
  \[\pi_1(I^3 - C - L_i') \cong \pi_1(U - C) *_{\pi_1(\partial U)} \pi_1(I^3 - U^\circ - L_i').\]
  After applying the functor $Q_q^p$ to each of the groups, this is no longer an isomorphism, but we get a map
  \[Q_q^p\pi_1(U - C) *_{Q_q^p\pi_1(\partial U)} Q_q^p\pi_1(I^3 - U^\circ - L_i') \to Q_q^p\pi_1(I^3 - C - L_i') \cong Q_q^pF_m\]
  which is clearly surjective.  To determine the class of the longitude of $C$ for every $i$, it suffices to describe its class in $Q_q^p\pi_1(U - C)$ and give enough information to specify the homomorphism $Q_q^p\pi_1(\partial U) \to Q_q^p\pi_1(U - C)$.  The main issue is that $\pi_1(U - C)$ depends on $C$.

  Consider the homomorphism
  \[\psi_C:F_{r+1} \to \pi_1(U - C)\]
  induced by sending the first $r$ generators $y_1,\ldots,y_r$ to curves $\gamma_1,\ldots,\gamma_r$ generating $H_1(U)$ and the last generator $y_0$ to a meridian of $C$.  While this homomorphism may not be surjective, it becomes surjective when we apply the functor $Q_q$.  This is because the abelianization $H_1(U - C)$ is spanned by $\psi_C(y_0),\ldots,\psi_C(y_r)$, and therefore any nilpotent quotient group of $\pi_1(U - C)$ is generated by these elements \cite[Lemma 5.9]{MKS}.  Now we can lift the homomorphism
  \[Q_q^p\ph_C:Q_q^p\pi_1(\partial U) \to Q_q^p\pi_1(U - C)\]
  to a homomorphism $\widetilde{\ph_C}:Q_q^pF_s \to Q_q^pF_{r+1}$ by fixing a set of generators for $H_1(\partial U)$.  We can also lift the class of the longitude of $C$ in $Q_q^p\pi_1(U - C)$ to an element $\widetilde w \in Q_q^pF_{r+1}$.

  Since $Q_q^p\psi_C$ is surjective, we can specify it (and thereby the group $Q_q^p\pi_1(U - C)$) by its kernel, a subgroup of $Q_q^pF_{r+1}$.

  We claim that $Q_q^p\psi_C \circ \widetilde{\ph_C}$ and $\widetilde w$ uniquely (up to conjugation) determine the class of the longitude of $C$ in $\alpha_i^CQ_q^p\pi_1(I^3 - C - L_i')$ for every $i$.  Suppose that we have two curves $C$ and $C'$ in $U$ equipped with homomorphisms $\ph_C:\pi_1(\partial U) \to \pi_1(U - C)$ and $\ph_{C'}:\pi_1(\partial U) \to \pi_1(U - C')$, respectively, and suppose that there is an isomorphism $\iota:Q_q^p\pi_1(U - C) \to Q_q^p\pi_1(U - C')$ such that $\iota \circ Q_q^p\ph_C=Q_q^p\ph_{C'}$.  Then by the functoriality of the pushout, $\iota$ induces an isomorphism
  \[\iota * \id:Q_q^p\pi_1(I^3 - C - L_i') \xrightarrow{\cong} Q_q^p\pi_1(I^3 - C' - L_i'),\]
  for any $i \in I$.  This isomorphism preserves the image of the meridians of components of $L_i'$ and sends the image of the meridian of $C$ to that of $C'$, therefore $\alpha_i^{C'} \circ (\iota * \id)=\alpha_i^C$.  Thus, for any $w \in Q_q^p\pi_1(U - C)$, $w$ and $\iota(w)$ induce the same element in $Q_q^p F_m$.
\end{proof}
It remains to bound the number of possible values of this invariant.  We have $r,s=O(\epsi^{-3})$.  It follows that the size of the group $Q_q^p F_{r+1}$, a $(q-1)$-step nilpotent group with $O(\epsi^{-3})$ generators and exponent $p$, is bounded by $p^{O(\epsi^{-3(q-1)})}$, and any subgroup is generated by $O(\epsi^{-3(q-1)})$ elements.  We can specify a homomorphism $Q_q^pF_s \to Q_q^p F_{r+1}$ by specifying the image of each generator, and we can specify a subgroup of $Q_q^p F_{r+1}$ by specifying a generating set.  Therefore we get
\[\lvert I \rvert \leq \bigl(p^{O(\epsi^{-3(q-1)})}\bigr)^{1+O(\epsi^{-3})+O(\epsi^{-3(q-1)})}=\exp(O(\epsi^{-6(q-1)})).\]
This implies a similar bound for $n$.
\end{proof}

\section{Dependence of constants on the link type}

In this section we explore a bit how the constants in our results depend on the complexity of the link $L$.  Namely, we compute some bounds in the case of a specific sequence of links: let $L(n)$ be the link consisting of $n$ fibers of the Hopf map $S^3 \to S^2$.  (In particular, every pair of strands in $L(n)$ forms a Hopf link.)

In the introduction of \cite{Free}, Freedman considered the following problem: given $\epsi>0$, which links can be embedded in $I^3$ (or $S^3$) so that every strand is $\epsi$-separated from every other?  In particular, Freedman conjectured that this can only be done for $L(n)$ when $n=O(\epsi^{-2})$.  We start by confirming this conjecture:
\begin{thm}
  Every embedding of $L(n)$ is at most $Cn^{-1/2}$-separated, for some constant $C>0$.
\end{thm}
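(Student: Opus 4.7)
My plan is to reduce the statement to a two-dimensional packing estimate by locating a single level plane that meets every strand. Write the cube as $[0,1]^3$ and, for each component $\gamma_i$ of the embedded $L(n)$, let $[a_i,b_i] \subset [0,1]$ denote the range of the $z$-coordinate on $\gamma_i$.

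The crucial geometric observation is that for every pair $i \neq j$ the intervals $[a_i,b_i]$ and $[a_j,b_j]$ must overlap. Otherwise, say $b_i < a_j$, any plane $\{z = c\}$ with $c \in (b_i, a_j)$ would separate $\gamma_i$ from $\gamma_j$ into disjoint half-spaces; each curve could then be enclosed in its own round ball, forcing $\operatorname{lk}(\gamma_i,\gamma_j) = 0$. This contradicts the fact that every pair of components of $L(n)$ is Hopf-linked.

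From this point the rest is routine. One-dimensional Helly (pairwise-intersecting intervals share a common point) gives a $z_0 \in \bigcap_i [a_i,b_i]$, and the intermediate value theorem applied to the connected curve $\gamma_i$ produces a point $p_i \in \gamma_i$ with $z(p_i) = z_0$. The $p_i$ lie in different components of a pairwise $\epsi$-separated link, so they are pairwise at distance at least $\epsi$, yet they all sit in the unit square $[0,1]^2 \times \{z_0\}$. A standard planar packing estimate---the open disks of radius $\epsi/2$ around the $p_i$ are disjoint and together fit into a region of area at most $(1+\epsi)^2 \leq 4$ when $\epsi \leq 1$---then bounds $n$ by $O(\epsi^{-2})$, equivalently $\epsi = O(n^{-1/2})$.

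I do not anticipate any serious obstacle: the whole argument pivots on the single geometric fact that a nontrivial linking number forbids separation by a plane, after which one-dimensional Helly and planar disk packing finish the job mechanically. The only line needing a sentence in a careful write-up is the justification that two curves in disjoint slabs have linking number zero, which is immediate from enclosing each in its own ball.
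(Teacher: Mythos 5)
Your proof is correct and takes a genuinely different and more elementary route than the paper. The paper locates a strand $\gamma$ whose $\epsi/2$-neighborhood has volume at most $n^{-1}$, approximates $\gamma$ by a cellular cycle of length $O(\epsi^{-2}n^{-1})$ living on the nearby tiles, fills it with a minimal (hence nonpositively curved) surface $\Sigma$ of area $O(\epsi^{-2}n^{-1})$, observes that $\Sigma$ must pierce every other strand because of linking, and then uses the monotonicity-type lower bound $\Area(B_{\epsi/2}(p_i)\cap\Sigma)\geq \pi(\epsi/2)^2$ to force $\Area(\Sigma)\geq \pi n(\epsi/2)^2$; comparing the two area bounds gives $\epsi = O(n^{-1/2})$. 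Your argument replaces all of this with the observation that nonzero linking forbids separation by a plane, so the $z$-ranges of the strands pairwise overlap, Helly in $\mathbb{R}^1$ gives a common height $z_0$, and the $n$ resulting points in the slice $[0,1]^2\times\{z_0\}$ are pairwise $\epsi$-separated, giving $n = O(\epsi^{-2})$ by ordinary disk packing. Both proofs rely on the same topological input (every pair of strands of $L(n)$ is Hopf-linked), but yours avoids minimal surfaces, curvature, and the isoperimetric inequality entirely, trading them for one-dimensional Helly and a two-dimensional packing bound; it also yields the slightly stronger structural fact that every embedding of $L(n)$ has a single horizontal plane meeting all $n$ strands. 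The paper's geometric-measure-theoretic approach is heavier but slightly more flexible: it works as soon as one strand of small volume is linked with many others, whereas your Helly step genuinely needs all pairs of intervals to overlap, i.e., all pairs of strands to be linked. The authors do remark that their proof can be recast combinatorially without minimal surface theory, but your plane-slicing argument is simpler still.
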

This estimate is sharp: a $Cn^{-1/2}$-separated embedding can be obtained by taking the fibers under the Hopf map of a $Cn^{-1/2}$-separated set of points in $S^2$.
\begin{proof}
  Suppose we have an $\epsi$-separated embedding of $L(n)$ for some $\epsi>0$.  Since the $\epsi/2$-neighborhoods of each strand are disjoint, there is at least one strand, say $\gamma$, such that
  \[\vol(N_{\epsi/2}(\gamma)) \leq n^{-1}.\]
  Now we tile $I^3$ at scale $\epsi$ and let $N \subseteq N_{\epsi/2}(\gamma)$ be the set of tiles that intersect $\gamma$ nontrivially.  Then $N$ contains at most $C \epsi^{-3} n^{-1}$ tiles for a universal constant $C$, and therefore also at most $C\epsi^{-3} n^{-1}$ $1$-cells.  We can express the homology class of $\gamma$ in $H_1(N;\ZZ/2\ZZ)$ using a cellular cycle $z \in C_1(N;\ZZ/2\ZZ)$.  Since every edge has length at most $\epsi$, the length of $z$ is bounded by $C\epsi^{-2} n^{-1}$.
  
  Then in $I^3$, $z$ bounds a minimal chain $\Sigma$ of area less than $C\epsi^{-2}n^{-1}$.  After smoothing the corners of $z$ very slightly, $\Sigma$ can be assumed to be a $C^1$ surface by work of Allard \cite[5.3.21]{FedBk}.  Since $\gamma$ has linking number $1$ with every other strand of $L(n)$ and $z$ is homologous to $\gamma$ inside $N$, $\Sigma$ must intersect every strand $\gamma_i$ in a point $p_i$.  Since the strands are $\epsi$-separated, the points $p_i$ must be contained in disjoint $\epsi/2$-balls.  And since the surface $\Sigma$ is minimal and therefore nonpositively curved,
  \[\Area(B_{\epsi/2}(p_i) \cap \Sigma) \geq \pi(\epsi/2)^2.\]
  Therefore $\Area(\Sigma) \geq \pi n(\epsi/2)^2$.  It follows that
  \[\frac{\pi}{4} n\epsi^2 < C\epsi^{-2}n^{-1}\]
  and therefore
  \[\epsi < (4C/\pi)^{1/4}n^{-1/2}. \qedhere\]
\end{proof}
\begin{rmk}
  The above proof can also be recast in purely combinatorial terms, without using minimal surface theory or curvature.
\end{rmk}
We can now estimate how the number of copies of $L(n)$ that can be $\epsi$-diagonally packed depends on the values of $\epsi$ and $n$.
\begin{thm}
  Whenever $n\epsi<<1$, the number $k$ of copies of $L(n)$ which can be $\epsi$-diagonally packed in $I^3$ satisfies
  \[C_1\exp(C_2\epsi^{-3}n^{-2}) \leq k \leq C_3\exp\bigl(C_4\epsi^{-3}n^{-1}\log n\bigr)\]
  for constants $C_1,C_2,C_3,C_4$.
\end{thm}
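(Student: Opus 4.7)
The plan is to prove the two bounds by adapting earlier methods while carefully tracking their dependence on $n$.

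For the lower bound, we would run the construction of Theorem~\ref{lower-bound} on $L(n)$ with the constants computed explicitly. Start with a diagram of $L(n)$ realizing the minimum number of crossings, namely $2\binom{n}{2}=\Theta(n^2)$ (each Hopf-linked pair of fibers contributes two crossings). The crossing-resolution recipe of Theorem~\ref{lower-bound} requires placing at each crossing a parallelepiped $P$ of dimensions $c_3\times c_3\times 1$, with parallelepipeds at distinct crossings pairwise disjoint. Packing $\Theta(n^2)$ such parallelepipeds disjointly inside $I^3$ forces $c_3=\Theta(n^{-1})$. Subdividing $P$ into sub-parallelepipeds of dimensions at least $9\epsi\times 3\epsi\times 7\epsi$ yields
\[r\leq c_3^2/(C\epsi^3)=\Theta(\epsi^{-3}n^{-2})\]
sub-parallelepipeds per crossing region, and the binary-word construction produces $2^r=\exp(\Omega(\epsi^{-3}n^{-2}))$ copies of $L(n)$. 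The hypothesis $n\epsi\ll 1$ guarantees $r\gg 1$.

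For the upper bound, the plan is to refine the argument of Theorem~\ref{sharp-upper-bound} using the fact, established earlier in this section, that any embedding of $L(n)$ is at most $Cn^{-1/2}$-separated. In a diagonal $\epsi$-embedding of $k$ copies of $L(n)$, the $\epsi/2$-neighborhoods of the $n$ components within a given copy are pairwise disjoint; since they fit in the containing ball (volume $\leq 1$), some component has $\epsi/2$-neighborhood of volume at most $C/n$. Tile $I^3$ transversely at scale $\sim\epsi/2$, so that this \emph{minimum-volume} component is covered by at most $O(\epsi^{-3}n^{-1})$ tiles. Partitioning copies by which of the $n$ components is minimum-volume contributes only a factor of $n$, so by the symmetry of $L(n)$ we focus on the subgroup where $\gamma_1$ is minimum-volume. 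Let $U_i$ denote the set of tiles that $\gamma_1^i$ intersects, so $|U_i|=O(\epsi^{-3}n^{-1})$. Grouping copies by $U$, the number of possible values of $U$ is
\[\binom{O(\epsi^{-3})}{O(\epsi^{-3}n^{-1})}=\exp(O(\epsi^{-3}n^{-1}\log n)).\]
For each fixed $U$, apply the proof of Theorem~\ref{sharp-upper-bound} to the Hopf sublink $\{\gamma_1,\gamma_2\}$: the linking-number map $F:\pi_1(U)\to\bigoplus_i\ZZ$ sends each $[\gamma_1^i]$ to the $i$th standard basis vector, so the number of copies per $U$ is at most $\rank\pi_1(U)=O(|U|)=O(\epsi^{-3}n^{-1})$. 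Multiplying through gives the stated upper bound.

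The main obstacle I expect is the geometric realization of the lower bound: verifying that the $\Theta(n^2)$ crossing parallelepipeds really can be placed disjointly inside $I^3$ in a way compatible with strand connectivity, so that each of the $n$ strands threads coherently through its $n-1$ crossings without introducing extraneous linking. A Hopf-fibration layout provides a natural candidate. The upper bound is more mechanical once the volume-based refinement of the tile count is in place.
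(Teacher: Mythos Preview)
Your proposal is correct and follows essentially the same route as the paper: for the lower bound, running the Theorem~\ref{lower-bound} construction on an efficient diagram of $L(n)$ with $\Theta(n^2)$ crossings and crossing boxes of width $\Theta(n^{-1})$; for the upper bound, pigeonholing on volume to find a component with $\epsi$-neighborhood of volume $\leq 1/n$, bounding the number of tile sets $U$ by a binomial coefficient, and then invoking the per-$U$ bound from Theorem~\ref{sharp-upper-bound}. Two small remarks: the $Cn^{-1/2}$-separation theorem you cite is not actually used in your argument (nor in the paper's), so you can drop that reference; and the ``obstacle'' you flag---realizing the $\Theta(n^2)$ crossing boxes compatibly---is exactly what the paper handles by exhibiting an explicit spiral diagram with crossings $c_1 n^{-1}$-separated.
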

The large difference between the two bounds comes about because the lower bound is governed by the number of crossings in $L(n)$, which is $2n^2$, while the upper bound is governed by the number of strands $n$.  It is not clear which of these is closer to the true answer.  Neither argument fully generalizes to links beyond $L(n)$, but the lower bound argument works for any link with an ``efficient'' diagram.
\begin{proof}
  Throughout the proof, we denote unspecified constants by $c_i$, $i=1,2,\ldots$.

  The lower bound comes from the construction in the proof of Theorem \ref{lower-bound} with slight modifications.  We start with a link diagram for $L(n)$ in $I^2$ in which crossings are $c_1n^{-1}$-separated, such as the one sketched in Figure \ref{fig:L_n}.
  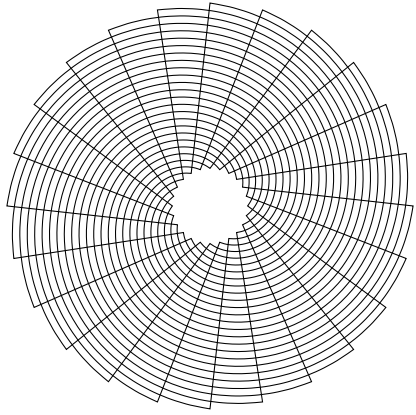
\begin{figure}
      \centering
      \begin{tikzpicture}[scale=0.5]
        \foreach \r in {0, 15,..., 345} {
          \draw [domain=0:330,variable=\t,smooth,samples=60]
            plot ({\t+\r}: {1+\t/75}) -- cycle;
        }
      \end{tikzpicture}
      \caption{An efficient diagram for $L(24)$.  The radial sections always cross above the spiral sections.}
      \label{fig:L_n}
  \end{figure}
  We realize this diagram in $I^3$ so that all crossings are realized by a strand near the top and a strand near the bottom.  Then at each crossing we fix a box around the lower crossing point whose dimensions are $c_2n^{-1} \times c_2n^{-1} \times 1/2$, so that it fits $c_3\epsi^{-3}n^{-2}$ cubes of side length $7\epsi$.  (Here we are using the assumption that $n\epsi<<1$.)  We then continue with the proof of Theorem \ref{lower-bound}, sending each strand parallel to the undercrossing through these $c_3\epsi^{-3}n^{-2}$ boxes and threading the strands parallel to the overcrossing around them.  This shows that
  \[k \geq 2^{c_3\epsi^{-3}n^{-2}}.\]

  To get the upper bound, we note that the strands of $L(n)$ all have identical topological roles, and therefore, for each $i$, we can assume that the strands $L^1_i,\ldots,L^n_i$ of the $i$th copy of $L(n)$ in $kL(n)$ are ordered so that
  \[\vol(N_\epsi(L^1_i)) \leq \vol(N_\epsi(L_2^i)) \leq \cdots \leq \vol(N_\epsi(L^n_i)).\]
  In particular, the $\epsi$-neighborhood of $L^1_i$ has volume at most $n^{-1}$.  Therefore the union $U_i$ of tiles which intersect nontrivially with $L_i^1$ consists of at most $c_6\epsi^{-3}/n$ tiles.  Using the easy bound ${a \choose b}<\left(\frac{ae}{b}\right)^b$, the number of distinct $U_i$ is at most
  \[N_U={c_6\epsi^{-3} \choose c_6\epsi^{-3}/n} < (ne)^{c_6\epsi^{-3}/n}=\exp\bigl(c_6\epsi^{-3}{\textstyle \frac{\log n}{n}}\bigr).\]
  The proof of Theorem \ref{sharp-upper-bound} implies that
  \[k=O(\epsi^{-3})N_U,\]
  and the $\epsi^{-3}$ factor can be absorbed into the constant $C_4$ in the exponent, proving the upper bound.
\end{proof}

\bibliographystyle{amsplain}
\bibliography{link-packing}
\end{document}